
\documentclass[]{article}
\usepackage[utf8]{inputenc}
\usepackage[english]{babel}
\usepackage{amsfonts, amssymb, amsthm, mathtools}
\usepackage{graphicx}
\usepackage[colorlinks=true,allcolors=.]{hyperref}
\usepackage{subcaption}
\usepackage[a4paper,margin=1in]{geometry}

\usepackage{csquotes}
\usepackage[backend=biber,style=numeric,maxbibnames=5,giveninits=true]{biblatex}
\addbibresource{nonlinext.bib}

\usepackage{lmodern}
\usepackage{enumitem}
\usepackage{color}

\usepackage[noabbrev,nameinlink]{cleveref}
\crefname{equation}{}{}

\usepackage{algorithm}
\usepackage{algpseudocode}
\usepackage{algorithmicx}
\usepackage{varwidth}
\usepackage{siunitx}
\sisetup{
	output-exponent-marker = \text{e},
	exponent-product={},
}

\usepackage{pgfplots}

\usepackage{todonotes}

\newtheorem{lemma}{Lemma}

\newcommand{\R}{{\mathbb{R}}}
\newcommand{\F}{F}
\renewcommand{\L}{{\mathcal{L}}}

\DeclareMathOperator{\Div}{div}
\DeclareMathOperator{\Tr}{Tr}
\DeclareMathOperator{\id}{id}

\newcommand{\Gobs}{{\Gamma_{\text{obs}}}}
\newcommand{\Gout}{{\Gamma_{\text{out}}}}
\newcommand{\Gin}{{\Gamma_{\text{in}}}}
\newcommand{\Gwall}{{\Gamma_{\text{wall}}}}
\newcommand{\Oobs}{{\Omega_{\text{obs}}}}

\newcommand{\holdall}{{G}}
\newcommand{\bc}{{\text{bc}}}
\newcommand{\vol}{{\text{vol}}}

\newcommand{\test}[1]{{\delta_{#1}}}
\newcommand{\mult}[1]{{\psi_{#1}}}

\newcommand{\control}{{c}}
\newcommand{\eext}{{\eta_{\text{ext}}}}
\newcommand{\edet}{{\eta_{\text{det}}}}
\newcommand{\ppenalty}{\mu}

\newcommand{\ainit}{{\alpha_{\text{init}}}}
\newcommand{\atarget}{{\alpha_{\text{target}}}}
\newcommand{\adec}{{\alpha_{\text{dec}}}}

\title{Mesh quality preserving shape optimization using nonlinear extension operators}
\author{
	Sofiya Onyshkevych\thanks{Department of Mathematics, University~Hamburg, Bundesstr.~55, 20146~Hamburg, Germany, \mbox{(sofiya.onyshkevych@uni-hamburg.de)}}\and 
	Martin Siebenborn\thanks{Department of Mathematics, University~Hamburg, Bundesstr.~55, 20146~Hamburg, Germany, \mbox{(martin.siebenborn@uni-hamburg.de)}}
}

\begin{document}
\maketitle
\begin{abstract}
	In this article, we propose a shape optimization algorithm which is able to handle large deformations while maintaining a high level of mesh quality. Based on the method of mappings we introduce a nonlinear extension operator, which links a boundary control to domain deformations, ensuring admissibility of resulting shapes.
	The major focus is on comparisons between well-established approaches involving linear-elliptic operators for the extension and the effect of additional nonlinear advection on the set of reachable shapes.
	It is moreover discussed how the computational complexity of the proposed algorithm can be reduced.
	The benefit of the nonlinearity in the extension operator is substantiated by several numerical test cases of stationary, incompressible Navier-Stokes flows in 2d and 3d.\\[0.3cm]
	\textbf{AMS subject classifications}: 35Q93, 49Q10, 35R30, 49K20, 65K10	\\[0.1cm]
	\textbf{Keywords}: Aerodynamic shape optimization, method of mappings, nonlinear extensions
\end{abstract}
\section{Introduction}
Shape optimization is a discipline in the field of optimization constrained by partial differential equations (PDEs).
Here the contour of the domain $\Omega$, where typically a PDE models the effects of interest, plays the role of the optimization variable.
Possible variants are that the outer shape of $\Omega$ is to be determined, e.g.\ when $\Omega$ represents a solid body, or interior interfaces, which separate spatially discontinuous coefficients such as material properties.
Shape optimization in general is nowadays an active field of research with applications ranging from electrostatics \cite{langer2015shape}, interface identification in  transmission  processes \cite{schulz2015structured, harbrecht2013numerical, naegel2015scalable}, fluid-dynamics \cite{schmidt2013three, BaLiUl, garcke2016stable}, acoustics \cite{udawalpola2008optimization},  image restoration and segmentation \cite{hintermuller2004second} and composite material identification \cite{siebenborn2017algorithmic, pinzon2020parallel} to nano-optics \cite{hiptmair2018large}.

In this article, we focus on shape optimization in fluid dynamics, which is also one of the pioneering applications in this field \cite{mohammadi2010applied, jameson2003aerodynamic, giles2000introduction}.
In general, the optimization problem can be formulated as
\begin{equation}
\label{eq::general_problem}
\begin{aligned}
&\min\limits_{\Omega \in \holdall_\text{adm}} && j(y, \Omega) \\
&\quad \text{s.t.} && E(y, \Omega) = 0
\end{aligned}
\end{equation}
where $j$ is a shape functional depending on a state variable $y$ and the shape of the domain $\Omega$.
Moreover, $y$ fulfills the PDE constraint $E$, which itself depends on $\Omega$.
A typical example is an obstacle specimen $\Oobs$ in a flow tunnel $\Omega$ as depicted in \cref{fig::domain}.
One of the main questions is an appropriate choice of the set of admissible shapes $\holdall_\text{adm}$, in which optimization takes place.
For problems of this type, two prominent approaches can be identified in the literature.
On the one hand, the Hadamard-Zol\'esio structure theorem is applied, which allows to trace back changes in the objective $j$ solely to variations of the boundary $\Gobs$ (see for instance \cite{sokolowski2012introduction, delfour2001shapes}).
It is thus possible to define directional shape derivatives via variations of $\Gobs$ in a direction normal to the boundary.
Together with the choice of an appropriate shape and tangent space, this allows to represent the sensitivity for $j$ w.r.t.\ $\Gobs$ as a gradient.
This is then interpreted as a deformation to $\Gobs$ and a new discretization mesh for the resulting domain can be computed.
By this step the mesh quality of the deformed domain can be ensured as pursued in, e.g., \cite{Wilke2005,EF16}.
Alternatively, the definition of shape and tangent space includes the surrounding domain $\Omega$, which immediately results in deformation information for the entire mesh (e.g.\ \cite{langer2015shape,schulz2016computational,EHLG18}) and makes the additional call to a mesh generator superfluous.
Typical approaches consider interpreting the shape sensitivity as a force term in linear elastic models described over $\Omega$. The resulting displacement field is then applied as a mesh deformation.
Especially in recent works (see for instance \cite{haubner2020continuous, dokken2019shape, DokkenFunke20}), linear elastic extension equations are considered and, in particular, a very small or even zero first Lam\'e constant is favored.

Moreover, a descent method allows to control the mesh quality from one iteration to the next, i.e.\ for one deformation.
Yet, in the limit of the sequence of design updates, quality is typically lost.
This effect is described in, e.g., \cite{pinzon2020parallel}, where variable interfaces must be prevented from overlapping.

In this article we follow a different approach, which gives a higher level of control on the quality of the mesh around the optimal shape.
Based on the method of mappings (cf.\ to \cite{MuSi}), the question for admissible shapes $\holdall_\text{adm} \coloneqq \lbrace
\F(\Omega)\colon F \in \mathcal{F}_\text{adm}\rbrace$ in \cref{eq::general_problem} is translated to the choice of appropriate function spaces, in which a deformation from reference to the optimal configuration is to be found.
Here $\mathcal{F}_\text{adm}$ denotes a set of admissible mappings.
Starting from a reference configuration $\Omega$, it is then optimized over the transformations $\F(\Omega)$ yet without explicitly performing mesh deformations.
For this purpose the PDE constrained is transformed to the virtual domain as $E(y, \F(\Omega))$.
The optimization problem then turns into a classical optimal control in the form of
\begin{equation}
\label{eq::opt_control_general}
\begin{aligned}
\min\limits_{\F \in \mathcal{F}_\text{adm}} &&& j(y, \F(\Omega)) \\
\text{s.t.}\quad &&& E(y, \F(\Omega)) = 0.
\end{aligned}
\end{equation}
This approach is a recent field of studies and applied in, e.g., \cite{Kunisch2001,Slawig2004,BaLiUl}.
Also based on this approach is the investigation in \cite{haubner2020continuous}, which is the starting point for the consideration in the present article.
Here the problem in \cref{eq::opt_control_general} is formulated as
\begin{equation}
\label{eq::opt_control}
\begin{aligned}
\min\limits_{\control \in L^2(\Gobs)}&&& j(y, \F(\Omega)) + \frac{\alpha}{2} \Vert c \Vert^2_{L^2(\Gobs)} & \\
\quad \text{s.t.}\quad &&& E(y, \F(\Omega)) = 0 & \\
 &&& \F = \id + w & \text{ in } \Omega\\
 &&& \det(D\F) \geq \edet& \text{ in } \Omega\\
 &&& w = S(\control, \Omega) & 
\end{aligned}
\end{equation}
in terms of a regularization parameter $\alpha > 0$ and a bound $\edet > 0$ on the determinant of the derivative of the mapping function $\F$.
The focus of the investigations therein is on the extension operator $S$.
It is suggested to choose $S$ to be the composition of mappings $c \mapsto b \mapsto w$.
Here $c \mapsto b$ is realized via the solution operator of a Laplace-Beltrami equation on $\Gobs$.
The mapping to the actual displacement, i.e.\ $b \mapsto w$, is then chosen to be the solution operator of a vector-valued elliptic equation, such as a linear elastic model.
It is proven that -- under certain circumstances -- the domain mapping $\F$ is locally a $C^1(\bar{\Omega}, \R^d)$-diffeomorphism provided that $\det(D\F) \geq \edet$ is fulfilled.

The main focus of our present article is a numerical study of different choices of the extension operator $S$.
It turns out that optimization settings, where larger deformations are to be expected, are a limiting factor for linear operators $S$.

This limitation is due to the fact that the structure of a shape space, that is as large as possible, can hardly be linear since this would require to explain what scalar multiples or sums of shapes are.
Yet, with the method of mappings and a linear extension operator $S$ we approximate the set of admissible shapes locally by a linear function space of admissible deformations to a reference configuration.

We thus suggest a nonlinear extension mapping and present numerical studies on the applicability.
It should be mentioned that the theory developed so far is not applicable in this case.
It only applies to the linear choice of $S$, which is a special case of the more general consideration in this article.

The motivation for the choice of $S$ in this present work is the observation that, on the one hand, via the condition $\det(D\F) \geq \edet$ the local injectivity of $\F$ can be ensured.
But on the other hand, this limits significantly the subset of admissible transformations $\mathcal{F}_\text{adm}$ and thus affects optimal shapes as outlined in the last section of this article.
It is thus the task to find an operator $S$ which prevents $\det(D\F) \geq \edet$ from becoming active even for large deformations.
We also discuss cases where the reference domain is not of circular shape and demonstrate the performance of the extension and influence on the mesh quality in a deformed domain.
The intention of this experiment is to demonstrate that the set of shapes $\holdall_\text{adm}$, which is constructed via the mappings from $\mathcal{F}_\text{adm}$, can be extended significantly and the dependence on the choice of a reference domain $\Omega$ can be hidden.
In particular, the studies illuminate whether large deformations in the optimization are possible for general reference configurations, which do not fulfill certain properties like convexity or an injective normal vector field.

This article is structured as follows: In \cref{sec::problem_formulation} the shape optimization problem is set up and formulated in terms of the method of mappings. \Cref{sec::algorithm} is devoted to the nonlinear extension model and, furthermore, the derivation of necessary optimality conditions and the presentation of an optimization algorithm.
In \cref{sec::numerics} numerical studies are conducted and discussed.
The article closes in \cref{sec::conclusion} with a conclusion of the results.
\begin{figure}[h!]
	\centering
	\def\svgwidth{0.6\textwidth}
\begingroup%
  \makeatletter%
  \providecommand\color[2][]{%
    \errmessage{(Inkscape) Color is used for the text in Inkscape, but the package 'color.sty' is not loaded}%
    \renewcommand\color[2][]{}%
  }%
  \providecommand\transparent[1]{%
    \errmessage{(Inkscape) Transparency is used (non-zero) for the text in Inkscape, but the package 'transparent.sty' is not loaded}%
    \renewcommand\transparent[1]{}%
  }%
  \providecommand\rotatebox[2]{#2}%
  \newcommand*\fsize{\dimexpr\f@size pt\relax}%
  \newcommand*\lineheight[1]{\fontsize{\fsize}{#1\fsize}\selectfont}%
  \ifx\svgwidth\undefined%
    \setlength{\unitlength}{1750.4442206bp}%
    \ifx\svgscale\undefined%
      \relax%
    \else%
      \setlength{\unitlength}{\unitlength * \real{\svgscale}}%
    \fi%
  \else%
    \setlength{\unitlength}{\svgwidth}%
  \fi%
  \global\let\svgwidth\undefined%
  \global\let\svgscale\undefined%
  \makeatother%
  \begin{picture}(1,0.55442108)%
    \lineheight{1}%
    \setlength\tabcolsep{0pt}%
    \put(0,0){\includegraphics[width=\unitlength,page=1]{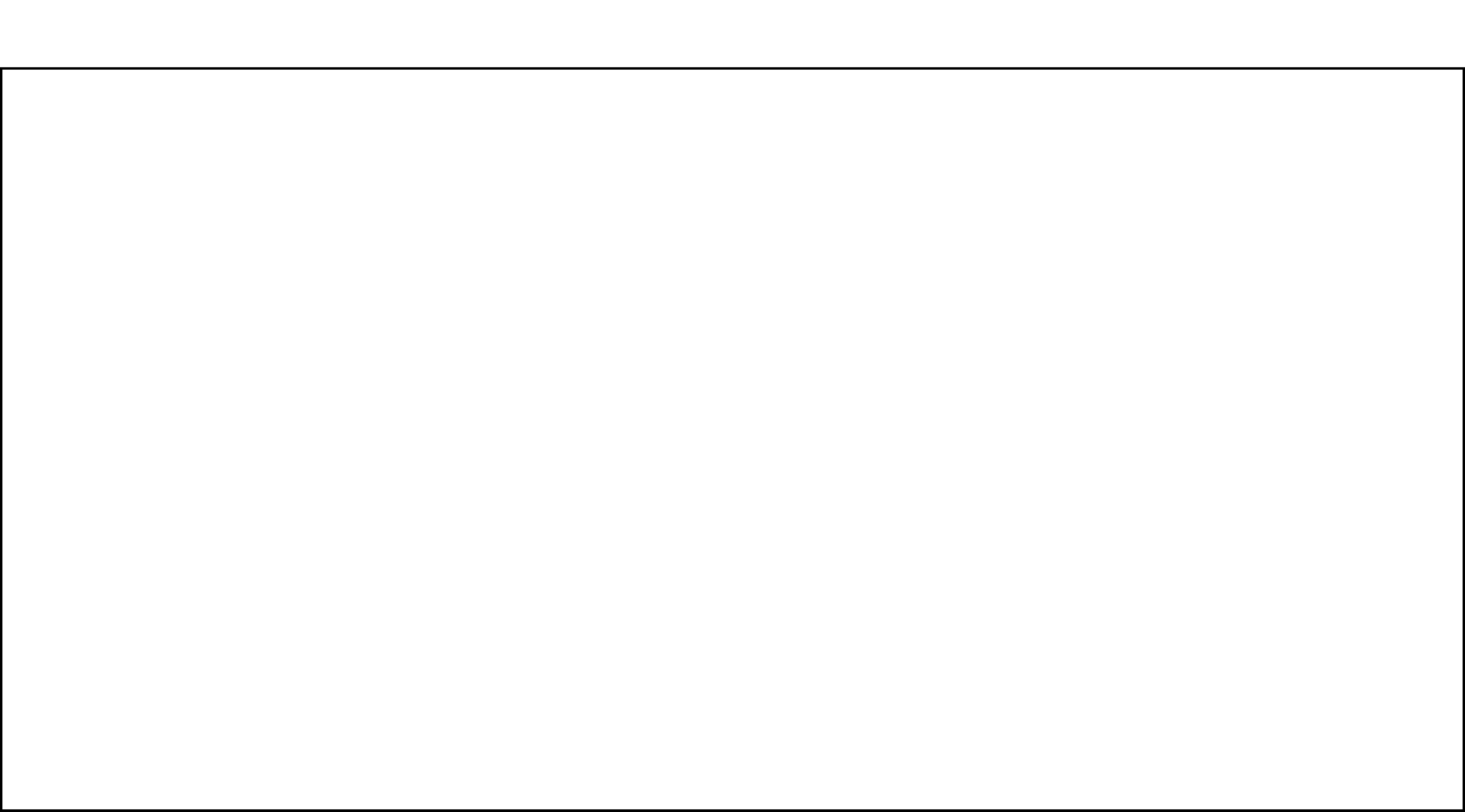}}%
    \put(0.00569204,0.24731643){\color[rgb]{0,0,0}\makebox(0,0)[lt]{\lineheight{0}\smash{\begin{tabular}[t]{l}$\Gin$\end{tabular}}}}%
    \put(0.9901012,0.24731643){\color[rgb]{0,0,0}\makebox(0,0)[rt]{\lineheight{0}\smash{\begin{tabular}[t]{r}$\Gout$\end{tabular}}}}%
    \put(0.45922374,0.00848074){\color[rgb]{0,0,0}\makebox(0,0)[lt]{\lineheight{0}\smash{\begin{tabular}[t]{l}$\Gwall$\end{tabular}}}}%
    \put(0.45922374,0.52056579){\color[rgb]{0,0,0}\makebox(0,0)[lt]{\lineheight{0}\smash{\begin{tabular}[t]{l}$\Gwall$\end{tabular}}}}%
    \put(0,0){\includegraphics[width=\unitlength,page=2]{sketch.pdf}}%
    \put(0.51806556,0.32385346){\makebox(0,0)[lt]{\lineheight{0}\smash{\begin{tabular}[t]{l}$\Gobs$\end{tabular}}}}%
    \put(0.70059944,0.12740709){\makebox(0,0)[lt]{\lineheight{0}\smash{\begin{tabular}[t]{l}$\Omega$\end{tabular}}}}%
    \put(0.39916465,0.2562819){\makebox(0,0)[lt]{\lineheight{0}\smash{\begin{tabular}[t]{l}$\Oobs$\end{tabular}}}}%
  \end{picture}%
\endgroup%

	\caption{Sketch of the holdall domain $\holdall = \Omega \cup \Oobs$.}
	\label{fig::domain}
\end{figure}

\section{Optimization problem}
\label{sec::problem_formulation}
We carry out our considerations based on a classical optimization problem in the field of fluid dynamics described in \cite{mohammadi2010applied}.
In a $d$-dimensional, bounded domain $\Omega$ with Lipschitz boundary, as sketched in \cref{fig::domain}, we consider the minimization of the following energy dissipation functional
\begin{equation}
\label{eq::energy_dissipation}
\min\limits_\Gobs\; j(v, \Gobs) = \frac{\nu}{2} \int_\Omega \sum_{i,j=1}^{d} \left( \frac{\partial v_i}{\partial x_j} \right)^2 \, dx
\end{equation}
where the contour $\Gobs$ of the obstacle $\Oobs$ is assumed to be variable.
The spatial dimension is chosen as $d\in\lbrace 2,3 \rbrace$.
In \cref{eq::energy_dissipation} the velocity field denoted by $v$ is given in terms of the stationary, incompressible Navier-Stokes equations
\begin{equation}
\label{eq::navier_stokes_strong}
\begin{aligned}
-\nu \Delta v + (v\cdot \nabla)v + \nabla p &= 0 & \text{ in } & \Omega\\
\Div v &= 0 & \text{ in } & \Omega\\
v &= v_\infty & \text{ on } & \Gin\\
v &= 0  & \text{ on } & \Gobs \cup \Gwall\\
pn - \nu \frac{\partial v}{\partial n} &= 0  & \text{ on } &\Gout.\\
\end{aligned}
\end{equation}
Together with $\Gobs$ the fluid domain $\Omega$ is allowed to change, but the outer boundaries, i.e.\ $\Gin$, $\Gout$ and $\Gwall$, of the experiment are fixed.
In \cref{eq::navier_stokes_strong} $p$ denotes the pressure, $v_\infty$ describes the velocity profile at the inflow boundary, $n$ is the outer normal vector and $\nu$ the viscosity.
Furthermore, we assume that $\Gobs \cap (\Gin \cup \Gwall \cup \Gout)= \emptyset$ holds during the entire optimization.

For the shape optimization of a specimen $\Oobs$ with respect to functionals of type \cref{eq::energy_dissipation}, it is essential to exclude trivial solutions.
Here, shrinking $\Oobs$ to a point or translations towards $\Gwall$ represents undesired descent directions.
Thus, the optimization problem has to be additionally constrained to geometrical conditions.
Our benchmark problem is to find optimal shapes of a specimen with a given volume, which remains located in the center of the flow tunnel.
This is achieved by fixing barycenter and volume of the obstacle $\Oobs$ with the constraints
\begin{align}
\vol(\Oobs) &= \int_\Oobs 1\, dx =\; \text{const}, \label{eq::geometric_constraints_vol}\\
\bc(\Oobs) &= \frac{1}{\vol(\Oobs)}\int_\Oobs x \, dx =\; \text{const}. \label{eq::geometric_constraints_bc}
\end{align}
Since the computation for the barycenter involves the volume of $\Oobs$ itself, these conditions are coupled in principle.
Yet, if \cref{eq::geometric_constraints_vol} is fulfilled, the term $\vol(\Oobs)^{-1}$ in \cref{eq::geometric_constraints_bc} is constant and can thus be factored out.
By further assuming that the barycenter of the specimen $\Oobs$ is $0\in \R^d$, it is thus sufficient to require $\int_\Oobs x \, dx =\; 0$.

In the following, for a vector-valued function $f: \R^d \to \R^d$, we denote by $Df$ the Jacobian matrix with the ordering $Df  =  \left( \frac{\partial f_i}{\partial x_j}\right)_{i,j=1,\dots, d} \in \R^{d\times d}$.
Let further
\begin{equation}
V := \lbrace v \in H^1(\Omega, \R^d) \colon \Div(v)=0, v\vert_\Gin=v_\infty, v\vert_{\Gwall\cup\Gobs}=0\; \text{a.e.} \rbrace,\; Q \coloneqq \lbrace p \in L^2(\Omega)\colon \int_\Omega p\, dx = 0\rbrace
\end{equation}
and consider the weak formulation of the PDE constraint \cref{eq::navier_stokes_strong}:

Find $(v,p) \in V\times Q$ such that
\begin{equation}
\label{eq::navier_stokes_weak}
\begin{aligned}
\nu\int_{\Omega} D v  : D \test{v}   + (D v\, v)\cdot \test{v} - p \Tr (D \test{v})  \,dx &= 0,\\
-\int_\Omega \test{p} \Tr \left( D v \right) \, dx &= 0
\end{aligned}
\end{equation}
for all test functions $(\test{v}, \test{p}) \in \lbrace \test{v} \in H^1(\Omega) \colon \Div(\test{v})=0,  \test{v}\vert_{\Gwall\cup\Gobs\cup\Gin} = 0 \, \text{a.e.} \rbrace \times Q$.
Note that within this article we are using the symbol $\test{\cdot}$ for test functions associated with a given variable.

In order to reformulate the optimization problem \crefrange{eq::energy_dissipation}{eq::geometric_constraints_bc} as an optimal control problem in appropriate function spaces, we consider from now on the domain $\Omega$ as a fixed reference configuration.
Let $F = \id + w$ with $w \in W^{1, \infty}(\Omega, \R^d)$ such that $F$ results in an admissible deformation for $\Omega$.
For the method of mappings we then consider the state \cref{eq::navier_stokes_weak}, objective \cref{eq::energy_dissipation} and the corresponding state variable $v$ in terms of $\F(\Omega)$.

By means of standard computations we obtain the weak formulation of the optimization problem pulled back to the reference domain $\Omega$ by
\begin{align}
\min\limits_{F \in \mathcal{F}_\text{adm}}\quad& j(v,\F) = \frac{\nu}{2} \int_{\Omega} \left( D v (D\F)^{-1} \right) :  \left( D v (D\F)^{-1} \right)  \det(D\F)\, dx\label{eq::weak_ns_objective}\\
\text{s.t. }\quad& \int_{\Omega} \bigl[ \nu \left( D v (D\F)^{-1} \right) :  \left( D \test{v} (D\F)^{-1} \right)  + (D v (D\F)^{-1}v)\cdot \test{v}\notag\\
&\qquad\qquad - p \Tr \left( D \test{v} (D\F)^{-1} \right) \bigr] \det(D\F)\, dx = 0,\label{eq::weak_ns_constraints_1}\\
-& \int_\Omega \test{p} \Tr (D v(D\F)^{-1}) \det(D\F) \,dx = 0,\label{eq::weak_ns_constraints_2}\\
& \int_{\Oobs} \det(D\F) - 1 \, dx = 0,\label{eq::weak_ns_constraints_3}\\
& \int_{\Oobs} F \,  \det(D\F) \, dx = 0 \label{eq::weak_ns_constraints_4}
\end{align}
for all test functions $(\test{v}, \test{p}) \in V\times Q$.
The optimization problem \crefrange{eq::weak_ns_objective}{eq::weak_ns_constraints_4} still leaves open the question for the set of admissible mappings $\mathcal{F}_\text{adm}$.
We thus follow the same approach as in \cite{haubner2020continuous} and translate it into the form of \cref{eq::opt_control}.
By reformulating the constraint $\det(D\F) \geq \edet$ as a penalty term, we obtain the final optimal control problem
\begin{equation}
\begin{aligned}
\label{eq::boundary_control_problem}
\min\limits_{\control \in L^2(\Gobs)} \;&J(v,\control) \coloneqq j(v,\F) + \frac{\alpha}{2} \int_\Gobs  c^2\, ds + \frac{\beta}{2} \int_\Omega \left((\edet - \det(D\F))_+\right)^2\, dx\\[0.2cm]
\text{s.t. }\quad&\text{\crefrange{eq::weak_ns_constraints_1}{eq::weak_ns_constraints_4}}\\
&\F = \id + w\\
&w = S(c),\\
\end{aligned}
\end{equation}
where $(\cdot)_+$ denotes the positive-part function.
The missing piece is now mapping from a scalar-valued boundary control $\control$ to admissible deformation fields $w$, which is the subject of the next section.

\section{Nonlinear extension operators}
\label{sec::algorithm}
Consider the optimal control problem \cref{eq::boundary_control_problem}.
The core of the reformulated shape optimization is the choice of the extension operator $S$, which links a scalar-valued boundary control $\control$ living on $\Gobs$ to a vector-valued displacement field $w$ in $\Omega$.
A domain transformation mapping $\F = \id + w$ is then obtained by the so-called perturbation of identity.
In particular, $w$ has to fulfill certain regularity properties as investigated in \cite{haubner2020continuous}.
It yet turns out in \cref{sec::numerics} that for large deformations, i.e.\ when the reference domain and the optimal configuration differ significantly, linear operators $S$ do not lead to satisfying results.
Note that the choice of $S$ significantly influences the set of reachable shapes $\holdall_\text{adm}$ determined via $\mathcal{F}_\text{adm}$.
It is thus our intention to find $S$ which allows for large deformations without significantly restricting $\holdall_\text{adm}$.
Simultaneously, the corresponding mesh deformations $\F(\Omega)$ should exhibit high element qualities for further usage in numerical simulations. 

The focus of the present article is thus to propose and study nonlinear extensions $S$ given in terms of the solution operator of the coupled PDEs
\begin{equation}
\begin{aligned}
b - \Delta_\Gobs b &= cn & \text{ on } & \Gobs\\
-\Div(\nabla w + \nabla w^\top) + \eext(w \cdot \nabla)w &=0 & \text{ in } &\Omega\\
(\nabla w + \nabla w ^\top) \cdot n &= b& \text{ on } &\Gobs\\
w &=0 & \text{ on } &\Gwall\cup\Gin\cup\Gout
\end{aligned}
\label{eq::nonlinear_extension_strong}
\end{equation}
In the equation above $\Delta_\Gobs$ denotes the vector-valued Laplace-Beltrami operator.
Note that by solving \cref{eq::nonlinear_extension_strong} the scalar-valued control $\control$ is mapped to a vector valued quantity $b$.
The benefit of this particular extension operator, and especially the nonlinearity $\eext(w \cdot \nabla)w$, which is in the focus of this article, becomes particularly visible for experiments with large deformations as pointed out in \cref{subsec::factor-influence}.
A popular choice, as discussed in the introduction, is to define the extension only via the linear term $\Div(\nabla w + \nabla w^\top)$.
Yet, this restricts the set $\mathcal{F}_\text{adm}$ significantly.
This is visible especially for problems in fluid dynamics, as pointed out in \cref{sec::numerics}, where the reference shape is of spherical type but the optimum to be found is stretched and approximates non-smooth tip and back.

Problems arise due to strong compressions of finite elements in the discretization orthogonal to the main deformation direction.
This observation motivates to add the nonlinear advection term $\eext(w \cdot \nabla)w$, which -- geometrically speaking -- promotes displacements $w$ where nodes move along large gradients.
This results in a homogeneous distribution of finite elements even around approximately non-smooth regions of $\Gobs$. 

For \cref{eq::nonlinear_extension_strong}, which specifies the mapping from boundary control to domain deformation, the weak formulation is given by: Find $w \in H_0^1(\Omega, \R^d)$ such that
\begin{align}
\int_\Gobs b\cdot \test{b} + D_{\Gobs} b : D_{\Gobs} \test{b} \, ds &= \int_\Gobs \control n \cdot \test{b} \, ds \label{eq::nonlinear_ext_lb} \\
\int_{\Omega} (Dw + Dw^\top):D\test{w} + \eext(Dw\, w)\cdot \test{w} \, dx &= \int_\Gobs b \test{w} \, ds \label{eq::nonlinear_ext}
\end{align}
for all $\test{w} \in H_0^1(\Omega, \R^d)$ and in terms of $\eext \geq 0$.
Here $D_\Gobs$ denotes the derivative tangential to $\Gobs$.
In \cref{eq::nonlinear_ext_lb} the scalar-valued boundary control $\control \in L^2(\Gobs)$ is multiplied with the outer normal vector field $n$ to $\Omega$ at $\Gobs$.
Then a vector-valued Laplace-Beltrami equation is solved over $\Gobs$.
This is coupled with nonlinear \cref{eq::nonlinear_ext} where the influence of the advection term is controlled via $\eext$.
Note that the linear extension operators investigated in \cite{haubner2020continuous} arise as a special case of system \cref{eq::nonlinear_ext_lb,eq::nonlinear_ext}.

Now that the extension operator is chosen, we can combine the optimization problem \cref{eq::boundary_control_problem} with the extension operator  \cref{eq::nonlinear_ext_lb,eq::nonlinear_ext} to obtain the Lagrangian
\begin{multline}
\label{eq::Lagrangian}
\L ( w, v, p, b, \control, \mult{w},\mult{v},\mult{p}, \mult{b}, \mult{\vol}, \mult{\bc}) =\\
\begin{aligned}
&\frac{\nu}{2} \int_{\Omega} \left( D v (D\F)^{-1} \right) :  \left( D v (D\F)^{-1} \right)  \det(D\F)\, dx +\frac{\alpha}{2}\int_\Gobs \control^2 \, ds
+ \frac{\beta}{2} \int_{\Omega} (( \edet - \det(D\F) )_+ )^2 \, dx \\
&- \int_{\Omega} \bigl[ \nu \left( D v (D\F)^{-1} \right) :  \left( D \mult{v} (D\F)^{-1} \right)  + (D v (D\F)^{-1}v)\cdot \mult{v} - p \Tr \left( D \mult{v} (D\F)^{-1} \right) \bigr] \det(D\F)\, dx\\
&+ \int_\Omega \mult{p} \Tr (D v(D\F)^{-1}) \det(D\F) \,dx\\
&- \int_\Omega (D w + D w^\top): D \mult{w} + \eext(Dw\, w)\cdot \mult{w} \, dx + \int_\Gobs b\cdot \mult{w} \, ds\\
&- \int_\Gobs b\cdot \mult{b} + D_{\Gobs} b : D_{\Gobs} \mult{b} \, ds + \int_\Gobs \control n \cdot \mult{b} \, ds \\
&- \mult{\bc} \cdot \int_{\Omega} (x + w) \,  \det(D\F) \, dx - \mult{\vol} \int_{\Omega} \det(D\F) - 1 \, dx,
\end{aligned}
\end{multline}
where $\mult{\cdot}$ denotes for each variable the associated multiplier.
Note that there is no variable corresponding to the multipliers $\mult{\vol} \in \R$ and $\mult{\bc} \in \R^d$.
These are the finite dimensional multipliers for the barycenter and volume condition \cref{eq::geometric_constraints_vol,eq::geometric_constraints_bc}.
\begin{lemma}
\label{lem::opt_sys}
The first order optimality system associated to the Lagrangian $\L$ in \cref{eq::Lagrangian} is given by the derivatives $\L_{w}, \L_{v}, \L_{p}, \L_{b}, \L_{\mult{w}}, \L_{\mult{v}}, \L_{\mult{p}}, \L_{\mult{b}}, \L_{\control}, \L_{\mult{\vol}}, \L_{\mult{\bc}}$ as 
\begingroup
\allowdisplaybreaks
\begin{align}
	\L_{w}  \test{w} = &
	- \nu\int_\Omega ( Dv (D \F)^{-1}): (D v (D \F)^{-1} D \test{w} (D \F)^{-1})  \det(D\F) \, dx \notag\\&
	+ \frac{\nu}{2} \int_\Omega (Dv (D \F)^{-1}): (Dv (D \F)^{-1}) \Tr((D\F)^{-1} D \test{w}) \det(D\F) \, dx \notag\\&
	- \beta \int_\Omega (\edet - \det(D\F))_+ \Tr((D\F)^{-1} D \test{w}) \det(D\F) \, dx\notag\\&
	+ \nu\int_\Omega (Dv (D \F)^{-1} D \test{w} (D \F)^{-1}) : ( D \mult{v} (D \F)^{-1}) \det(D\F) \, dx \notag\\& 
	+ \nu\int_\Omega (Dv (D \F)^{-1}): ( D \mult{v} (D \F)^{-1} D \test{w} (D \F)^{-1}) \det(D\F) \, dx \notag\\& 
	- \nu\int_\Omega (Dv (D \F)^{-1}) : ( D \mult{v} (D \F)^{-1})\Tr((D\F)^{-1} D \test{w}) \det(D\F) \, dx \notag\\&
	+ \int_\Omega (D v (D \F)^{-1} D \test{w} (D \F)^{-1}\, v)\cdot \mult{v} \det(D\F) \, dx \notag\\&
	- \int_\Omega (D v (D \F)^{-1} \, v)\cdot \mult{v} \Tr((D\F)^{-1} D \test{w}) \det(D\F) \, dx \label{eq::opt_sys_w}\\&
	- \int_\Omega p \Tr( D \mult{v} (D \F)^{-1} D \test{w} (D \F)^{-1}) \det(D\F) \, dx \notag\\&
	+ \int_\Omega p \Tr( D \mult{v} (D \F)^{-1}) \Tr((D\F)^{-1} D \test{w}) \det(D\F) \, dx \notag\\&
	+ \int_\Omega \mult{p} \Tr(D v (D \F)^{-1} D \test{w} (D \F)^{-1}) \det(D\F) \, dx \notag\\&
	- \int_\Omega \mult{p} \Tr( D v (D \F)^{-1}) \Tr((D\F)^{-1} D \test{w}) \det(D\F) \, dx \notag\\&
	- \int_\Omega ( D \test{w} + D \test{w}^\top) : D \mult{w} + \eext((D\test{w}\, w)+(Dw\, \test{w}))\cdot \mult{w}\, dx \notag\\&
	+\beta \int_\Omega (\edet - \det(D\F))_+ \Tr((D\F)^{-1} D \test{w}) \det(D\F) \, dx\notag\\&
	- \mult{\bc} \cdot \int_\Omega \test{w} \det(D\F) + (x + w) \Tr((D\F)^{-1} D \test{w}) \det(D\F) \, dx\notag\\&
	- \mult{\vol} \int_\Omega \Tr((D\F)^{-1} D \test{w}) \det(D\F) \, dx = 0\notag,
\end{align}
\endgroup
\begin{equation}
\L_{\mult{w}}  \test{\mult{w}} =
- \int_\Omega (D w + D w^\top): D \test{\mult{w}} + \eext(Dw\, w)\cdot \test{\mult{w}} \, dx + \int_\Gobs b\cdot \test{\mult{w}} \, ds = 0,
\label{eq::opt_sys_adj_w}
\end{equation}
\begin{align}
\begin{split}
\L_{v}  \test{v} = & \nu\int_{\Omega} \left( D \test{v} (D\F)^{-1} \right) :  \left( D v (D\F)^{-1} \right)  \det(D\F)\, dx\\
&- \nu\int_{\Omega}  \left( D \test{v} (D\F)^{-1} \right) :  \left( D \mult{v} (D\F)^{-1} \right)  \det(D\F) \,dx\\
&-\int_{\Omega} (D \test{v} (D\F)^{-1}v)\cdot \mult{v} +(D v (D\F)^{-1}\test{v})\cdot \mult{v}\det(D\F)\, dx\\
&- \int_{\Omega} \test{p} \Tr (D \test{v}(D\F)^{-1}) \det(D\F) \,dx = 0,
\end{split}
\label{eq::opt_sys_v}
\end{align}
\begin{equation}
\label{eq::opt_sys_adj_v}
\begin{aligned}
\L_{\mult{v}} \test{\mult{v}} = &
-\nu\int_{\Omega}  \left( D v (D\F)^{-1} \right) :  \left( D \test{\mult{v}} (D\F)^{-1} \right)  \det(D\F) \,dx \\
&-\int_{\Omega} (D v (D\F)^{-1}v)\cdot \test{\mult{v}} \det(D\F)\, dx\\
&+ \int_{\Omega} p \Tr (D \test{\mult{v}}(D\F)^{-1}) \det(D\F) \,dx = 0,
\end{aligned}
\end{equation}
\begin{equation}
	\label{eq::opt_sys_p}
	\L_{p}  \test{p} = - \int_\Omega \test{p} \Tr \left( D \mult{v} (D\F)^{-1} \right) \det(D\F)\, dx = 0,
\end{equation}
\begin{equation}
	\label{eq::opt_sys_adj_p}
	\L_{\mult{p}}  \test{\mult{p}} = \int_\Omega \test{\mult{p}} \Tr \left( D v (D\F)^{-1} \right) \det(D\F)\, dx = 0,
\end{equation}
\begin{equation}
    \label{eq::opt_sys_b}
	\L_{b}  \test{b} =
	- \int_\Gobs \test{b}\cdot \mult{b} + D_{\Gobs} \test{b} : D_{\Gobs} \mult{b} \, ds + \int_\Gobs \test{b} \cdot \mult{w} \, ds = 0,
\end{equation}
\begin{equation}
    \label{eq::opt_sys_adj_b}
	\L_{\mult{b}}  \test{\mult{b}} =
	- \int_\Gobs b\cdot \test{\mult{b}} + D_{\Gobs} b : D_{\Gobs} \test{\mult{b}} \, ds + \int_\Gobs \control n \cdot \test{\mult{b}} \, ds = 0,
\end{equation}
\begin{equation}
    \label{eq::opt_sys_c}
	\L_{\control}  \test{\control} =
	\alpha \int_\Gobs \control \test{\control} \, ds + \int_\Gobs \test{\control} n \cdot b \, ds = 0,
\end{equation}
\begin{equation}
    \label{eq::opt_sys_adj_vol}
	\L_{\mult{\vol}}  \test{\mult{\vol}} = - \test{\mult{\vol}} \int_{\Omega} \det(D\F) - 1 \, dx = 0,
\end{equation}
\begin{equation}
	\label{eq::opt_sys_adj_bc}
	\L_{\mult{\bc}}  \test{\mult{\bc}} =
	- \test{\mult{\bc}} \cdot \int_{\Omega} (x + w) \,  \det(D\F) \, dx = 0,
\end{equation}
for all test functions $\test{w}$, $\test{v}$, $\test{p}$, $\test{b}$, $\test{\mult{w}}$, $\test{\mult{v}}$, $\test{\mult{p}}$, $\test{\mult{b}}$, $\test{\control}$, $\test{\mult{\vol}}$, and $\test{\mult{\bc}}$.
\end{lemma}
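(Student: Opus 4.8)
The plan is to show that the eleven displayed identities are precisely the stationarity conditions of $\L$, that is, that each one is the Gateaux derivative of $\L$ with respect to the indicated variable, taken in an arbitrary admissible test direction and set to zero. A key structural feature to exploit is that we work in the full-space (all-at-once) formulation: the states $w,v,p,b,\control$ and the multipliers $\mult{w},\mult{v},\mult{p},\mult{b},\mult{\vol},\mult{\bc}$ are treated as \emph{independent} arguments of $\L$, so every derivative is a genuine partial derivative and no implicit dependence through the constraints needs to be resolved. First I would dispatch the derivatives in the multipliers. Since $\L$ is affine in each of $\mult{w},\mult{v},\mult{p},\mult{b},\mult{\vol},\mult{\bc}$, differentiating simply extracts the linear coefficient, which reproduces the corresponding constraint. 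This recovers \cref{eq::opt_sys_adj_w,eq::opt_sys_adj_v,eq::opt_sys_adj_p,eq::opt_sys_adj_b,eq::opt_sys_adj_vol,eq::opt_sys_adj_bc} — primal feasibility, i.e.\ the transformed Navier--Stokes system, the pulled-back divergence constraint, the Laplace--Beltrami coupling, and the volume and barycenter conditions — essentially by inspection.

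Next I would treat the derivatives in the remaining variables $v$, $p$, $b$ and $\control$, which deliver the adjoint and gradient equations \cref{eq::opt_sys_v,eq::opt_sys_p,eq::opt_sys_b,eq::opt_sys_c}. These are comparatively mild, because $\L$ is quadratic in $v$ — through the dissipation objective and the convective term $(Dv(D\F)^{-1}v)\cdot\mult{v}$, which is the only genuine state nonlinearity — and affine in $p$, $b$ and $\control$. Differentiating in $v$ therefore requires a single application of the product rule to the convective term, producing the two contributions $(D\test{v}(D\F)^{-1}v)\cdot\mult{v}$ and $(Dv(D\F)^{-1}\test{v})\cdot\mult{v}$ that appear in \cref{eq::opt_sys_v}, while all remaining terms are linear in $v$ and copy over directly. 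The derivatives in $p$ and $b$ are immediate, and the $\control$-derivative simply combines the Tikhonov term $\tfrac{\alpha}{2}\int_\Gobs\control^2\,ds$ with the boundary term in which $\control$ appears linearly to give \cref{eq::opt_sys_c}.

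The substance of the proof, and the only real obstacle, is the design derivative $\L_w$ in \cref{eq::opt_sys_w}. Here the difficulty is that $w$ enters the transformed problem solely through $\F=\id+w$, hence through $D\F=I+Dw$, which sits inside both the inverse $(D\F)^{-1}$ and the determinant $\det(D\F)$ in every volume integral, as well as explicitly through the factor $(x+w)$ in the barycenter term and through the advection $\eext(Dw\,w)\cdot\mult{w}$. The computation rests on two matrix-calculus identities, applied at $A=D\F$ and $B=D\test{w}$, namely the derivative of the inverse
\[
\left.\tfrac{d}{d\epsilon}\right|_{\epsilon=0}(D\F+\epsilon D\test{w})^{-1}=-(D\F)^{-1}\,D\test{w}\,(D\F)^{-1},
\]
and Jacobi's formula for the determinant
\[
\left.\tfrac{d}{d\epsilon}\right|_{\epsilon=0}\det(D\F+\epsilon D\test{w})=\det(D\F)\,\Tr\!\big((D\F)^{-1}D\test{w}\big).
\]
Applying the product rule to each integrand — distributing the inverse-derivative across the two factors of every Frobenius contraction $A:B=\Tr(A^\top B)$ and across the convective and divergence terms, while the determinant-derivative supplies the accompanying $\Tr((D\F)^{-1}D\test{w})$ weight — yields, term by term and in the listed order, the contributions of \cref{eq::opt_sys_w}. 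The advection term differentiates into $\eext\big((D\test{w}\,w)+(Dw\,\test{w})\big)\cdot\mult{w}$, and the barycenter term splits into a piece from $\test{w}$ and one from the determinant weight.

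Two technical points round out the argument. First, the penalty $\tfrac{\beta}{2}\int_\Omega((\edet-\det(D\F))_+)^2\,dx$ is Gateaux differentiable even though $t\mapsto t_+$ is not: the composition $t\mapsto\tfrac12(t_+)^2$ is $C^1$ with derivative $t_+$, so the chain rule gives the penalty contribution $-\beta\int_\Omega(\edet-\det(D\F))_+\,\Tr((D\F)^{-1}D\test{w})\det(D\F)\,dx$ shown in \cref{eq::opt_sys_w}. Second, differentiation under the integral sign is justified because the integrands depend smoothly (indeed rationally) on the entries of $D\F=I+Dw$ wherever $\det(D\F)\neq0$, which holds on the admissible set, so the difference quotients are dominated on bounded neighbourhoods in $W^{1,\infty}(\Omega,\R^d)$. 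The main obstacle is thus not conceptual but the careful bookkeeping of the $\L_w$ expansion: tracking the transpose and contraction pattern of each term, ensuring that both factors of every double contraction each contribute an inverse-derivative, and collecting the determinant weights consistently. Summing all contributions for each variable and equating to zero yields the asserted first-order optimality system.
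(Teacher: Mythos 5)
Your proposal is correct and follows essentially the same route as the paper's proof: the multiplier and state derivatives are read off from the (multi-)linear structure of $\L$, and the design derivative $\L_w$ is assembled term by term from Jacobi's formula $\frac{\partial \det(D\F)}{\partial w}\test{w} = \Tr((D\F)^{-1}D\test{w})\det(D\F)$ together with $\frac{\partial (D\F)^{-1}}{\partial w}\test{w} = -(D\F)^{-1}D\test{w}(D\F)^{-1}$, which are exactly the two identities the paper invokes. Your observation that $t\mapsto\tfrac12(t_+)^2$ is $C^1$ with derivative $t_+$ matches the paper's handling of the penalty term (the paper writes the chain rule with a characteristic function that is then absorbed into $(\cdot)_+$), so there is no substantive difference.
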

\begin{proof}
The derivatives of $\L$ are obtained by utilizing standard rules of differentiation.
Note that we particularly use the following identities
\begin{equation*}
\frac{\partial \det(D\F)}{\partial w} \test{w} = \Tr((D\F)^{-1} \test{w})\det(D\F) \quad\text{ and }\quad \frac{\partial (D\F)^{-1}}{\partial w} \test{w} = - (D\F)^{-1} D\test{w} (D\F)^{-1}.
\end{equation*}
For the derivative of the penalty term we utilize that
\begin{equation*}
\begin{aligned}
\frac{\partial}{\partial w}(( \edet - \det(D\F) )_+ )^2 (w)\, \test{w} &= 2 ( \edet - \det(D\F) )_+ \, \chi_{\lbrace\edet > \det(D\F)\rbrace}\, \frac{\partial}{\partial w} \det(D\F)(w) \test{w}\\ &= 2 ( \edet - \det(D\F) )_+ \, \Tr((D\F)^{-1} \test{w})\det(D\F).
\end{aligned}
\end{equation*}
\end{proof}

Recall that the condition $\det(D\F) \geq \edet$ in the problem formulated in \cref{eq::opt_control} is realized via the penalty term $\frac{\beta}{2} \int_{\Omega} (( \edet - \det(D\F) )_+ )^2 \, dx$.
The corresponding term in \cref{eq::opt_sys_w} of the optimality system in \cref{lem::opt_sys} is non-differentiable due to the positive-part function $(\cdot)_+$.
Following the discussions in \cite[sec.\ 3.5]{haubner2020continuous} the mapping $w \mapsto -\beta \int_\Omega (\edet - \det(D\F))_+ \Tr((D\F)^{-1} D \test{w}) \det(D\F) \, dx$ is semismooth and one can compute an element from the generalized derivative in direction $\delta^\prime$ as
\begin{multline*}
(\test{w}, \delta^\prime) \mapsto \beta \int_\Omega \chi_{\lbrace\edet > \det(D\F)\rbrace} \Tr((D\F)^{-1} D \delta^\prime)  \Tr((D\F)^{-1} D \delta^\prime) \det(D\F)^2\\
+ (\edet - \det(D\F))_+ \Tr((D\F)^{-1} D\delta^\prime (D\F)^{-1} D \test{w}) \det(D\F)\\
- (\edet - \det(D\F))_+ \Tr((D\F)^{-1} D \test{w}) \Tr((D\F)^{-1} D\delta^\prime) \det(D\F)\, dx.
\end{multline*}

In the following we briefly present a solution algorithm for the optimality system \crefrange{eq::opt_sys_w}{eq::opt_sys_adj_bc}.
For this purpose we pursue a similar approach as in \cite{haubner2020continuous}.
The core of this method is to solve the nonlinear shape optimization problem \cref{eq::boundary_control_problem} for a decreasing sequence of regularization parameters $\alpha_k$, starting from $\alpha_0 = \ainit$ until the desired level $\atarget$ is reached.
Because each subsequent optimization problem $k$ is nonlinear, this approach benefits from utilizing the known values $y_k \coloneqq (w, v, p, b, \control, \mult{w},\mult{v},\mult{p}, \mult{b}, \mult{\vol}, \mult{\bc})_{k}$ as initial guess in the $(k+1)$-th iteration.
\Cref{alg::direct} summarizes this procedure.
Since parts of the optimality system are non-differentiable, we apply a semismooth Newton's method.
\begin{algorithm}[ht!]
	\caption{Direct optimization algorithm}
	\begin{algorithmic}[1]
		\Require $ 0 < \atarget \leq \ainit$, $0 < \adec < 1$
		\State Set $y_{0}$ to zero
		\State $k \gets 0$
		\State $\alpha_k \gets \ainit$
		\While{$\alpha_k \geq \atarget$}
		\State \parbox[t]{0.8\textwidth}{ Solve \crefrange{eq::opt_sys_w}{eq::opt_sys_adj_bc} for  $y_{k+1}$ with semismooth Newton's method,
			$y_{k}$ as initial guess and regularization parameter $\alpha_k$\strut}
		\State $\alpha_{k+1} \gets \adec\alpha_{k}$
		\State $k \gets k+1$
		\EndWhile
	\end{algorithmic}
	\label{alg::direct}
\end{algorithm}
In \cref{sec::numerics} we demonstrate how to choose the paramter $\ainit, \adec$ and $\atarget$ and illustrate their influence.

\section{Numerical results}
\label{sec::numerics}
This section is devoted to different numerical case studies of stationary, incompressible Navier-Stokes shape optimization problems.
The purpose is to illuminate features of the nonlinear extension operator $S$ proposed in \cref{sec::algorithm}.
In particular, the benefit for optimization benchmark problems, which involve large deformations from the reference configuration to optimal shapes, is numerically investigated.
It is moreover discussed how the local injectivity can be extended to globally injective transformation mappings by adding an artificial volume to the aerodynamic specimen.
Furthermore, algorithmic solvability of the optimality system \crefrange{eq::opt_sys_w}{eq::opt_sys_adj_bc} is addressed in the end of this section.

The experimental settings for the tests are chosen to be comparable in 2d and 3d, respectively.
The holdall domain $\holdall \in \lbrace \holdall_\text{2d}, \holdall_\text{3d} \rbrace$, which reflects the flow tunnel in the experiment, is chosen as \begin{equation*}
\holdall_\text{2d} \coloneqq \lbrack -7, 7 \rbrack \times \lbrack -3, 3 \rbrack \;\text{ and }\; \holdall_\text{3d} \coloneqq \lbrace x \in \R^3: -7 \leq x_1 \leq 7, \sqrt{x_2^2 + x_3^2} \leq 3 \rbrace.
\end{equation*}
Let $\delta$ denote the diameter of the flow tunnel $G$.
We then fix the velocity at the inflow boundary $\Gin$ by $v_\infty = \left( \cos(\frac{2 \pi \Vert x \Vert_2}{\delta}), 0, \dots, 0\right)^\top \in \R^d$.
In all experiments where the specimen $\Oobs$ is a circle or sphere, the radius is given by $r = \num{0.5}$ and $\bc(\Oobs) = 0\in \R^d$.

The discretization of all appearing PDEs is carried out with standard, piecewise linear P1 finite elements.
In order to guarantee stability, we follow the pressure stabilized Petrov Galerkin approach (see for instance \cite{Hughes1986new}), which utilizes an additional term for the pressure $p$ and its adjoint variable $\mult{p}$.
The system under consideration is thus enriched by the two equations
\begin{align*}
g_p &\coloneqq \ppenalty\sum_{T \in \mathcal{T}_h} h_T^2 \int_T \left((D\F)^{-1}\nabla p\right) \cdot \left((D\F)^{-1}\nabla \test{p} \right)\, dx\\
g_\mult{p} &\coloneqq \ppenalty\sum_{T \in \mathcal{T}_h} h_T^2 \int_T \left((D\F)^{-1}\nabla \mult{p}\right) \cdot \left((D\F)^{-1}\nabla \test{\mult{p}} \right)\, dx
\end{align*}
where $\mathcal{T}_h$ denotes the set of all finite elements and $h_T$ measures the longest edge of element $T$.
For each of the subsequent experiments, $\ppenalty = \num{0.1}$ is chosen.

All computations related to the finite element method are carried out using the GETFEM++ library \cite{GETFEM}.
We utilize a parallel version of the library, which relies on PARMETIS \cite{PARMETIS} for mesh partitioning and load balancing.
All linear systems are handled via the parallel factorization solver MUMPS \cite{MUMPS}.
Both, 2d and 3d discretization meshes are produced with the GMSH toolbox \cite{GMSH} and the Delaunay algorithms therein.
If not stated otherwise, all 2d experiments follow the strategy of \cref{alg::direct} with the choice $\ainit=\num{1e-4}$, $\adec=\num{1e-1}$ and $\atarget=\num{1e-10}$.

\subsection{Non-convex shapes with large deformations}
\begin{figure}[h!]
	\centering
	\includegraphics[width=0.45\textwidth]{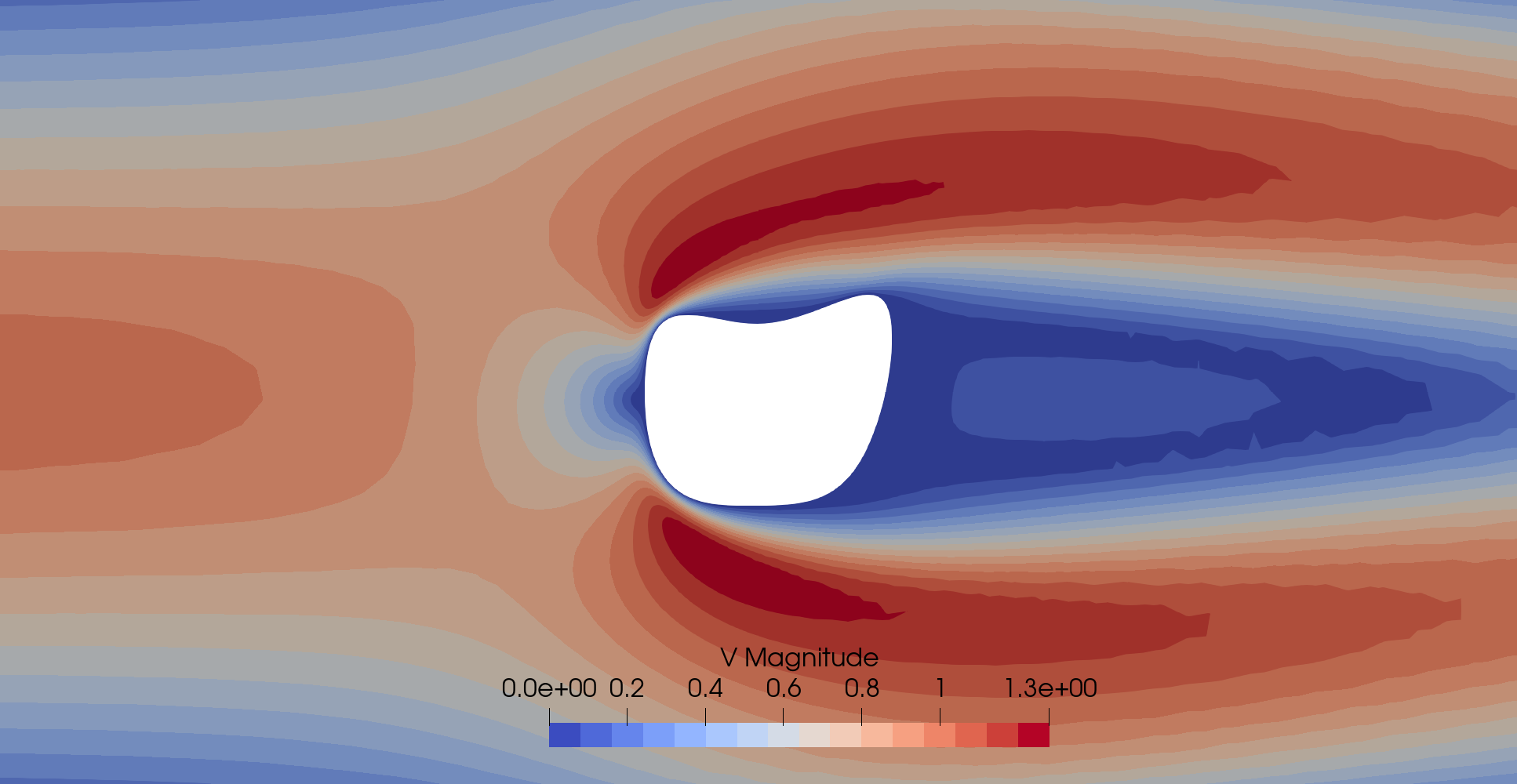}
	\includegraphics[width=0.45\textwidth]{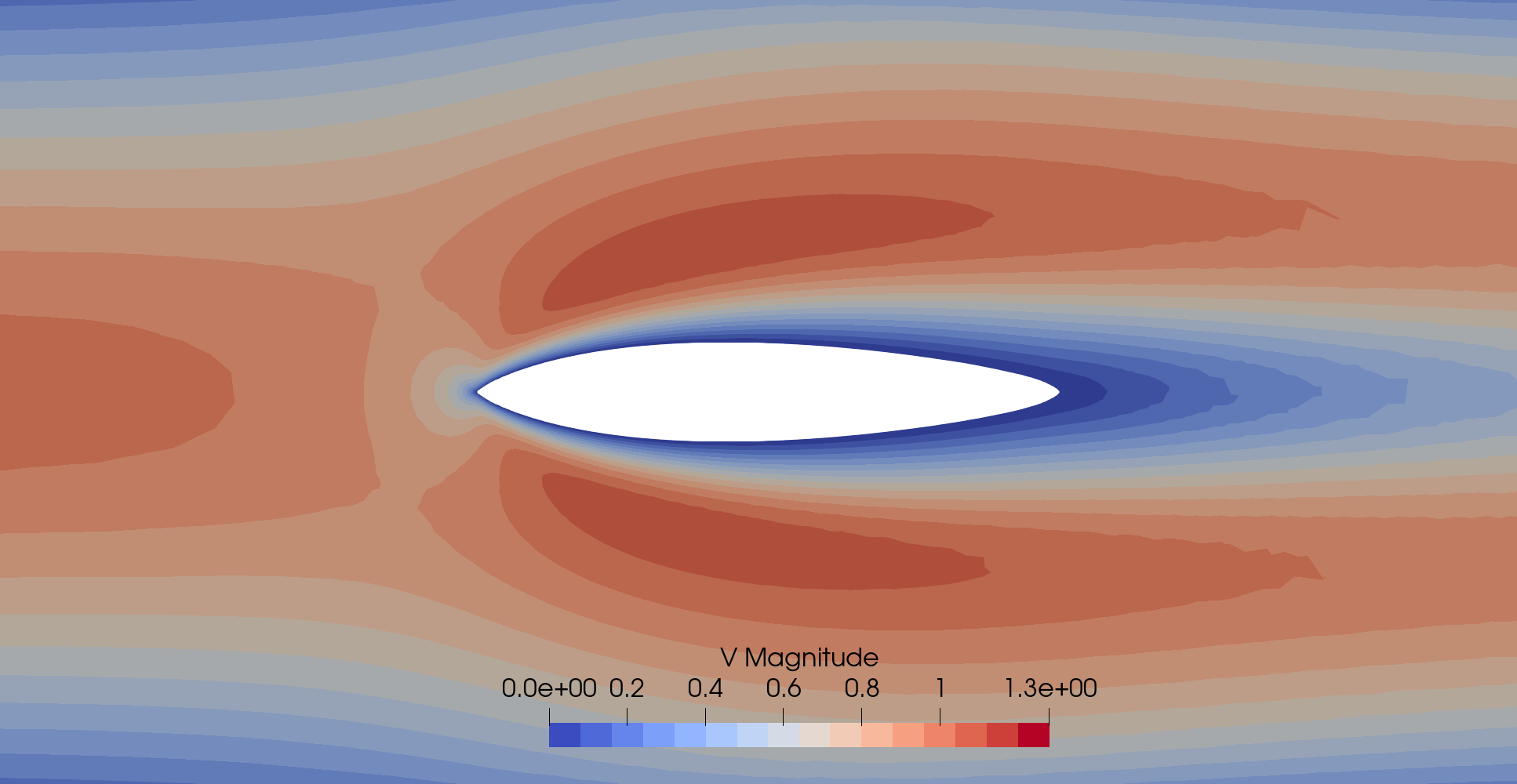}
	\caption{Magnitude of velocity $v$ computed on reference domain $\Omega$ (left) and deformed $\F(\Omega)$ (right).}
	\label{fig::spline-velocity}
\end{figure}
\begin{figure}[h!]
	\centering
	\begin{subfigure}{0.7\textwidth}
		\includegraphics[width=1.0\textwidth]{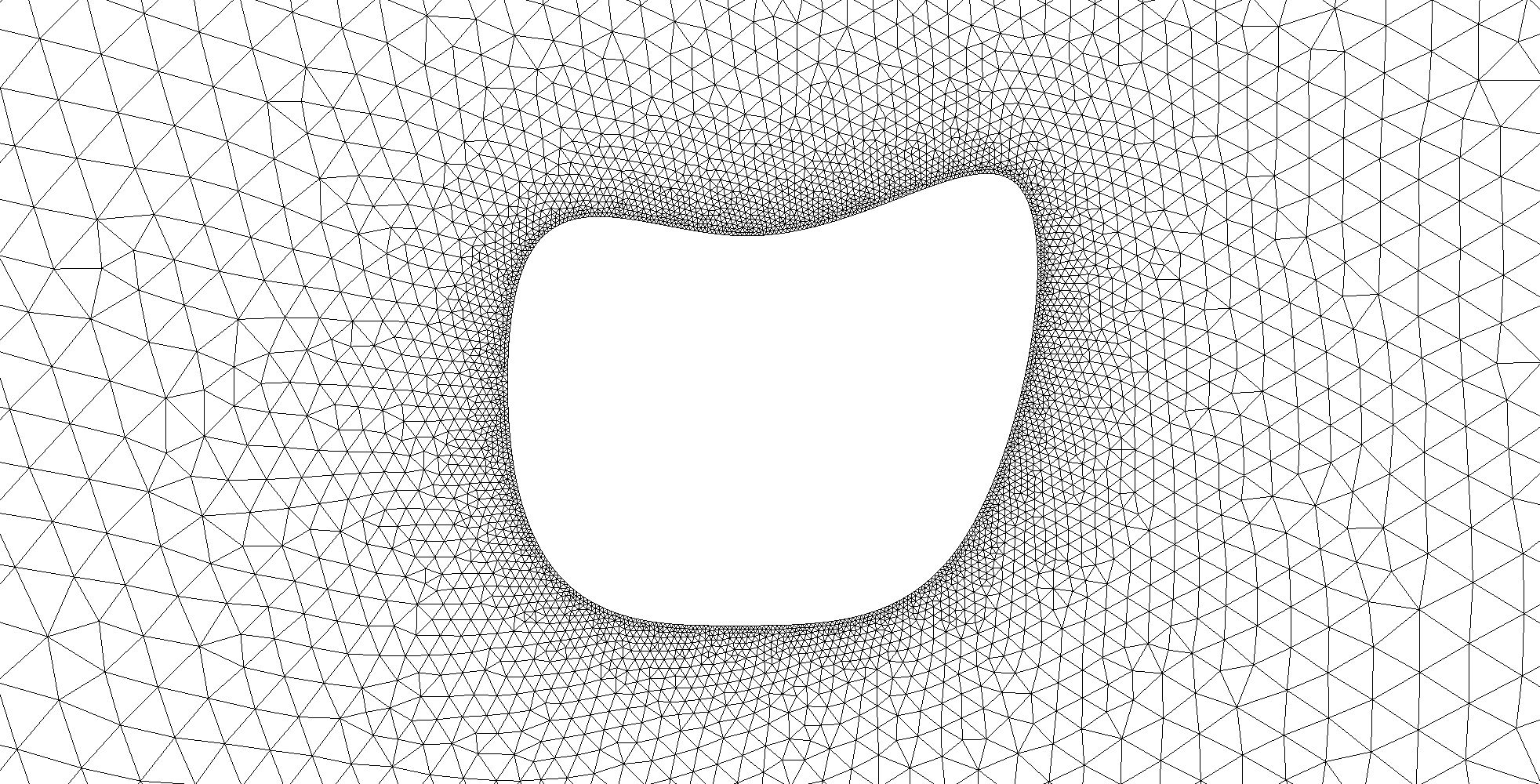}
		\subcaption{Reference grid}
	\end{subfigure}\\[0.4cm]
	\begin{subfigure}{0.7\textwidth}
		\includegraphics[width=1.0\textwidth]{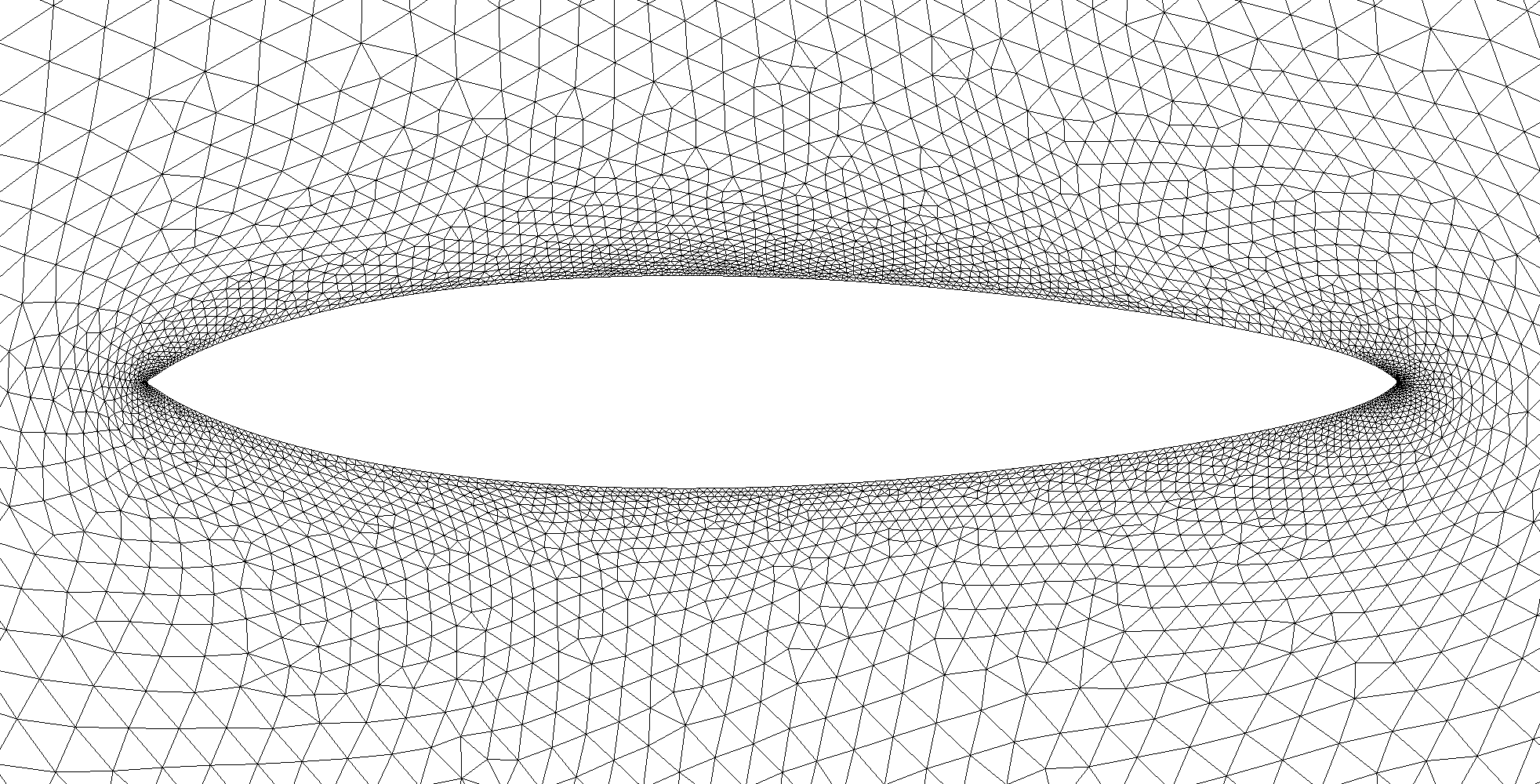}
		\subcaption{Deformed grid in $\F(\Omega)$}
	\end{subfigure}
	\caption{Mesh quality preserving shape optimization experiment with large deformation from reference to optimal configuration.}
	\label{fig::spline-mesh}
\end{figure}
Our first numerical study demonstrates the nonlinear extension equation for large deformations of a non-convex shape.
The reference domain $\Omega$ is chosen such that the specimen is described by a B-spline curve $\Gobs$ given in terms of 6 control nodes.
The situation is depicted in \cref{fig::spline-velocity,fig::spline-mesh}.

The relevance of this test case is to investigate the performance of the proposed approach for reference domains where the normal vector field $n$ does not homogeneously point in all directions as for a circular shape.
The influence of the normal vector is significant since it initially links the scalar-valued control $\control$ to a vector-valued quantity as can be seen in \cref{eq::nonlinear_extension_strong}.
\Cref{subsec::injectivity} is devoted to an experiment where several directions are underrepresented in the discretization of the normal vector $n$ due to the shape of $\Oobs$.

In \cref{fig::spline-velocity} the magnitude of velocity $\Vert v \Vert_2$, computed in the undeformed state $\Omega$, i.e.\ when $\control=0$, is depicted.
The right-hand side shows the velocity according to deformation $\F = \id + w$ in terms of the optimal control $\control$ after solving the optimality system given by \crefrange{eq::opt_sys_w}{eq::opt_sys_adj_bc}.
Furthermore, the optimal mapping $\F$ can be seen in \cref{fig::spline-mesh} in the displacement and deformation of discretization elements.
The relocation of triangles shows the effect of the nonlinear advection in the extension operator.
A deeper look on this effect and the resulting mesh quality is provided in \cref{subsec::quality-results}.

In this experiment the viscosity of the fluid is chosen as $\nu = \num{0.01}$.
The holdall domain $\holdall$ is as described above.
It consists of \num{382} surface segments on $\Gobs$ and \num{10106} triangles in $\Omega$.
Barycenter and volume of $\Oobs$ in the reference configuration are given by $\bc(\Oobs) = (\num{0.0307784},\num{-0.035759})^\top $ and $ \vol(\Oobs) = \num{0.809041}$.
Thus, an optimal shape also undergoes a small translation since $\bc(\F(\Oobs)) = 0$ is required.

The essential settings in terms of shape optimization are the parameter $\edet$ and $\eext$.
Here we choose $\eext = \num{3.0}$, which leads to the condition $\det(D\F) \geq \edet$ with $\edet = \num{5e-2}$ being inactive.
We can explain the homogeneously and smoothly deformed mesh due to this fact. 
In contrast, \cref{subsec::factor-influence} shows examples where $\det(D\F) \geq \edet$ is active close to the tip of the optimal shape and how the displacement $w$ and thereby the mesh quality in $\F(\Omega)$ is affected.

\subsection{Influence of factors $\edet$ and $\eext$}
\label{subsec::factor-influence}
\begin{figure}[h!]
	\begin{center}
		\includegraphics[width=0.7\textwidth]{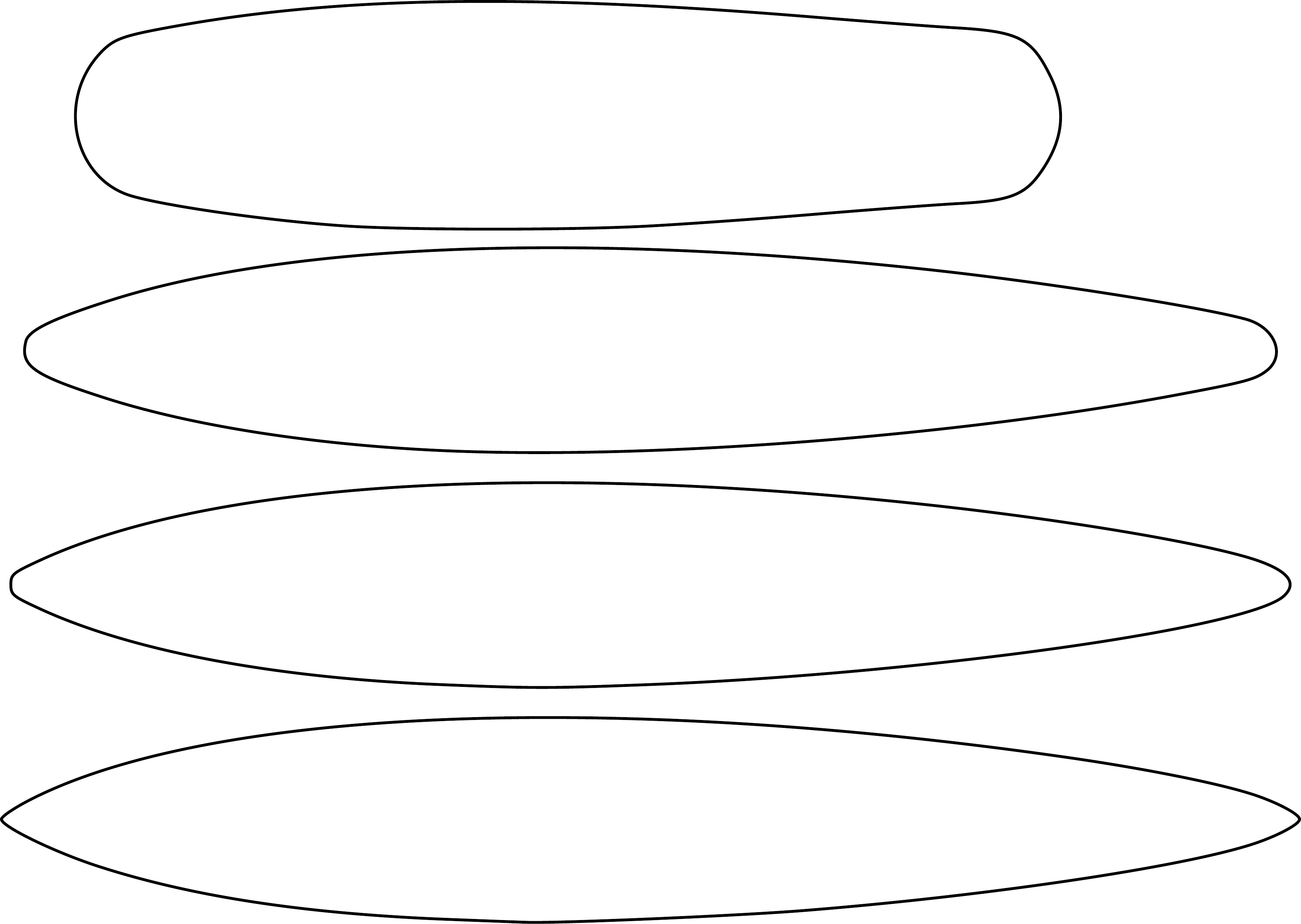}
	\end{center}
	\caption{Optimal shapes $\F(\Gobs)$ in terms of different $\edet \in \lbrace 0.5, 0.25, 0.2, 0.1 \rbrace$. The value $\edet$ decreases from top to bottom.}
	\label{fig::det-condition}
\end{figure}
\begin{figure}[h!]
	\centering
	\begin{subfigure}[c]{0.36\textwidth}
		\includegraphics[width=\textwidth]{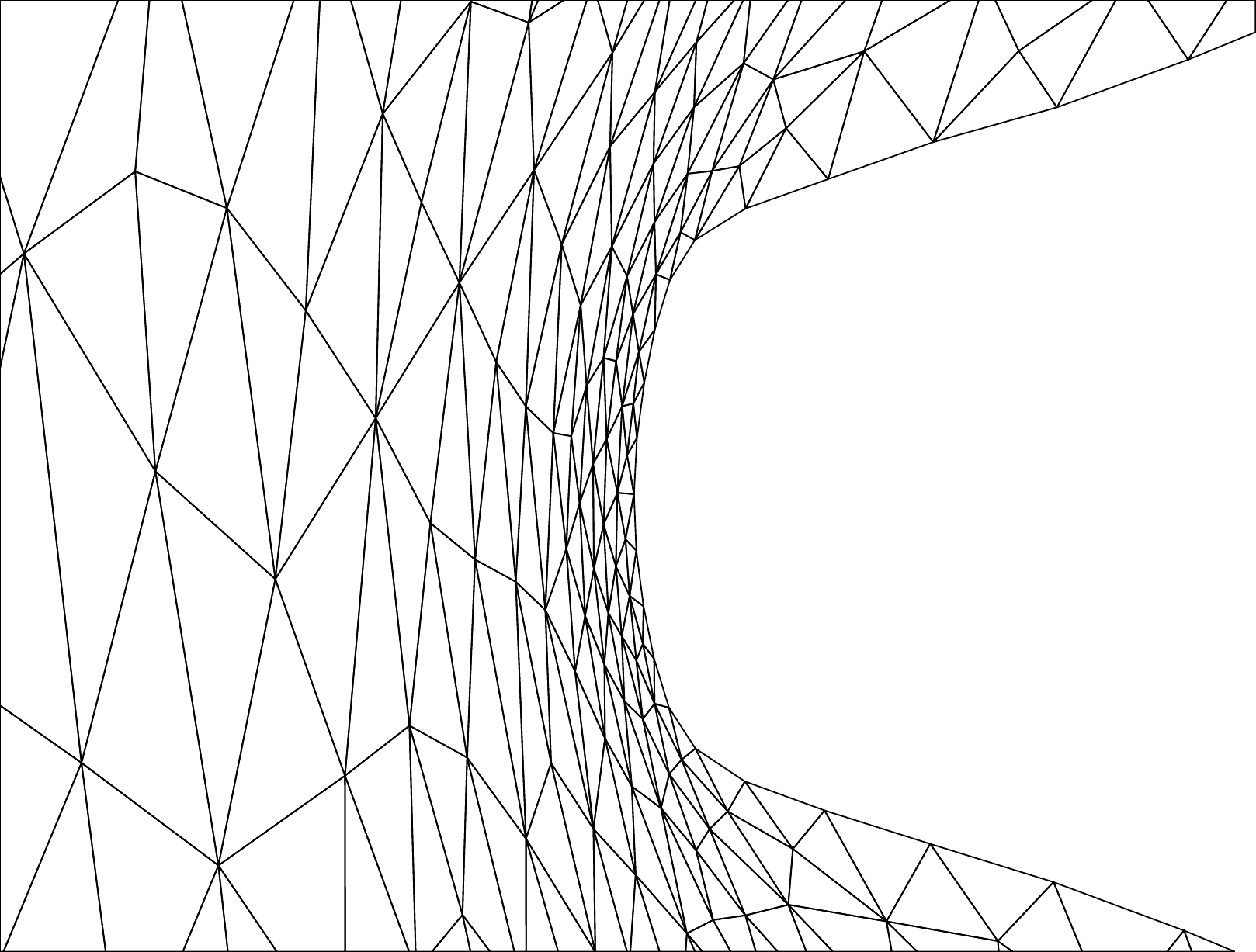}
		\subcaption{$\eext = 0.0$}
	\end{subfigure}
	\begin{subfigure}[c]{0.36\textwidth}
		\includegraphics[width=\textwidth]{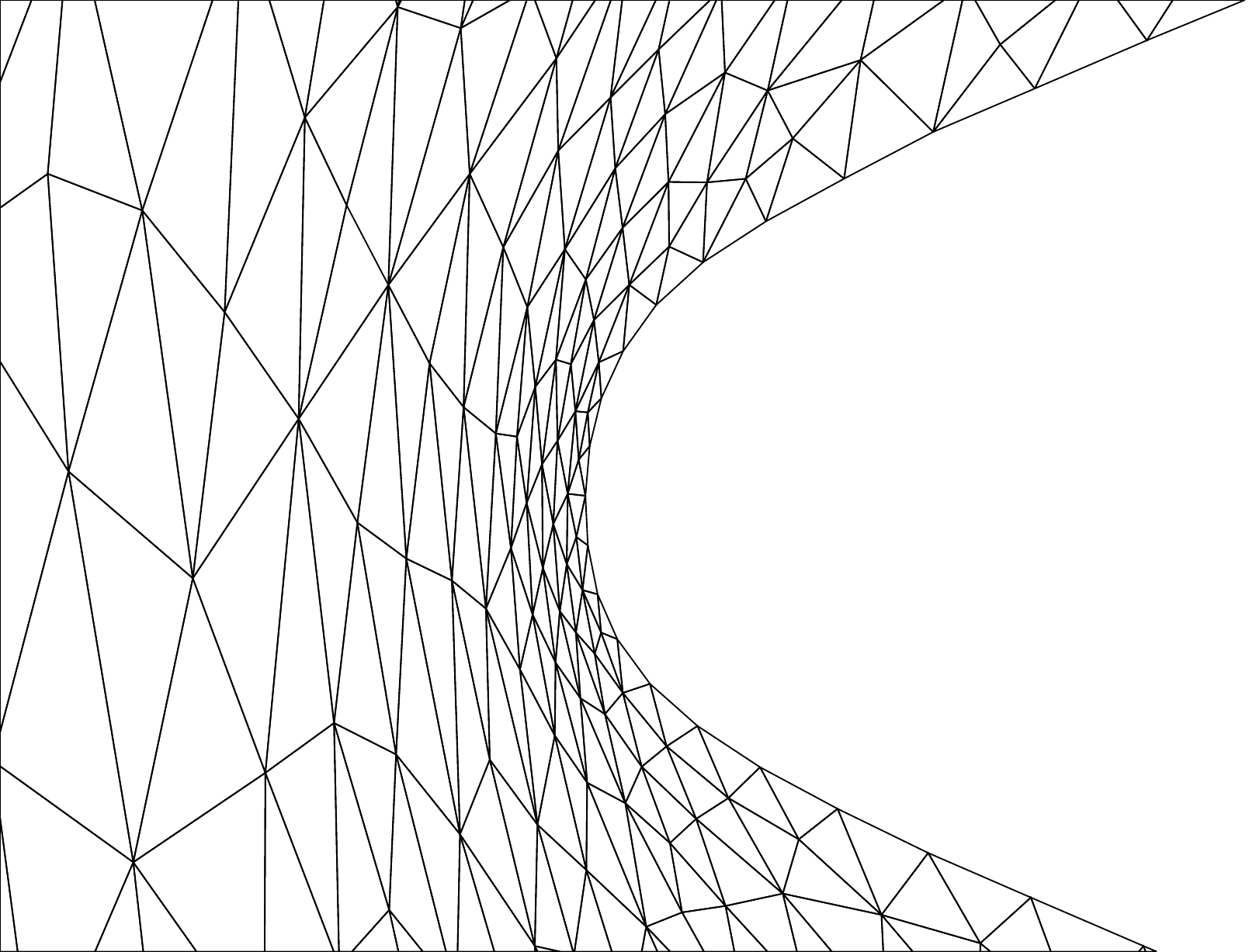}
		\subcaption{$\eext = 0.25$}
	\end{subfigure}
	\begin{subfigure}[c]{0.36\textwidth}
		\includegraphics[width=\textwidth]{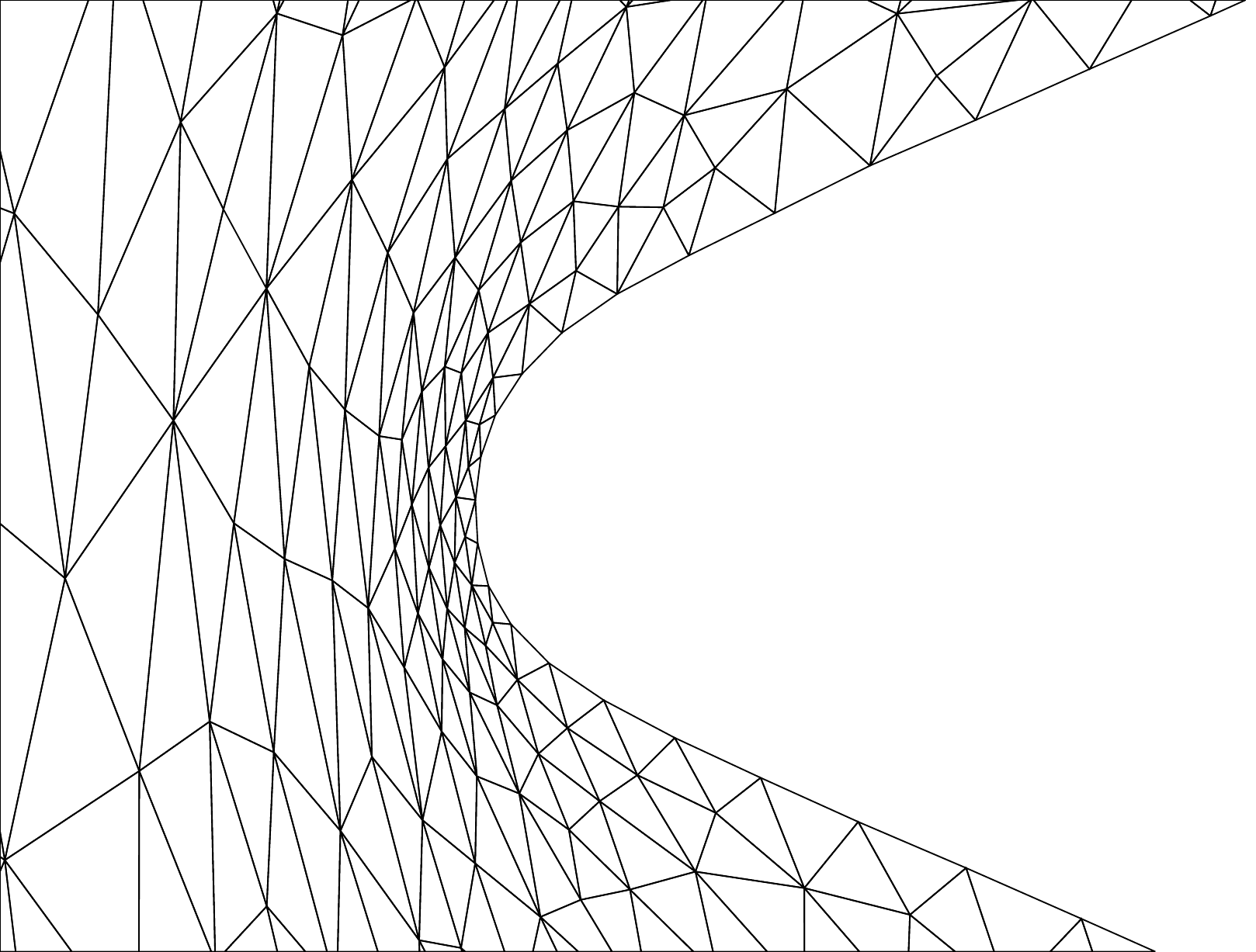}
		\subcaption{$\eext = 0.5$}
	\end{subfigure}
	\begin{subfigure}[c]{0.36\textwidth}
		\includegraphics[width=\textwidth]{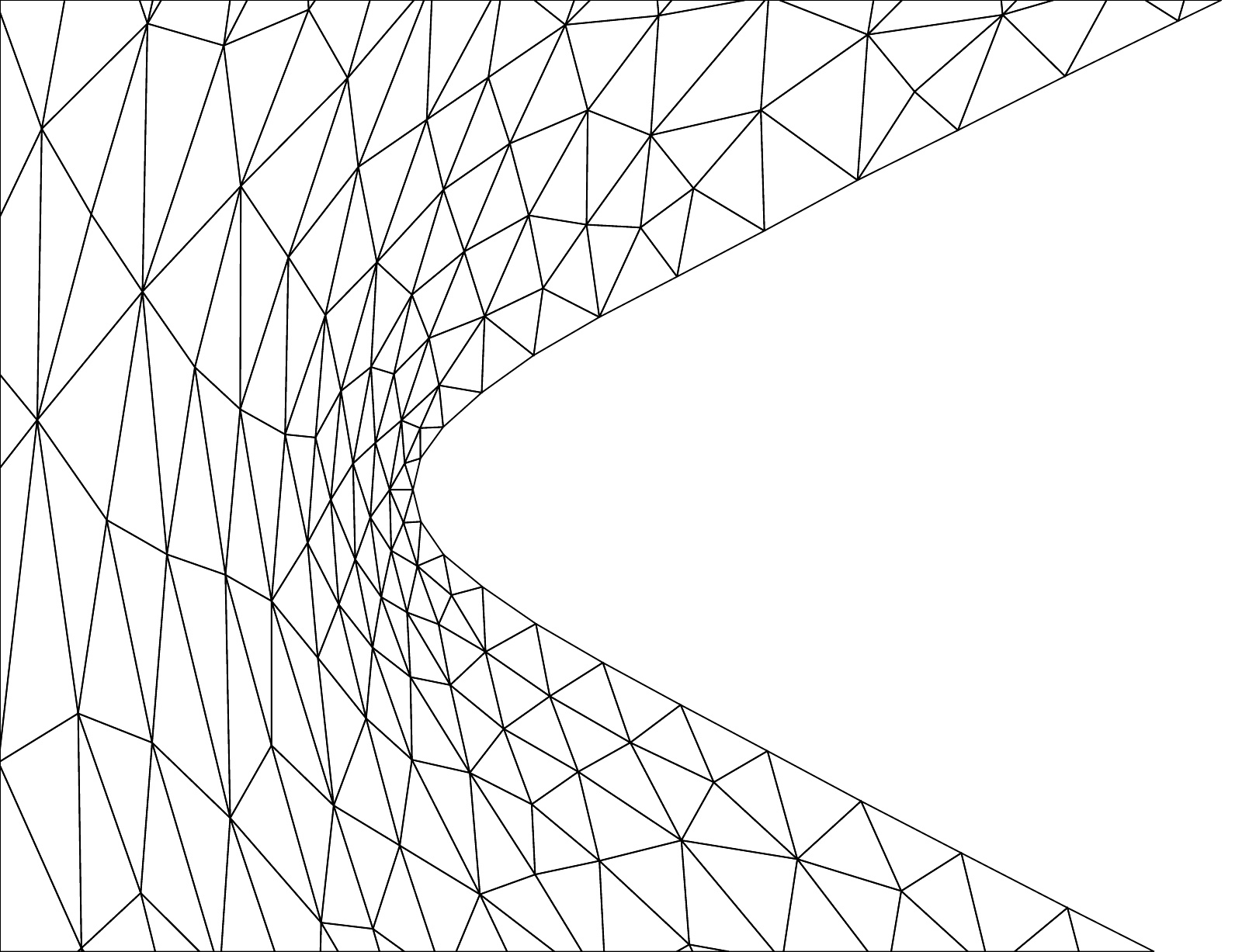}
		\subcaption{$\eext = 1.0$}
	\end{subfigure}
	\begin{subfigure}[c]{0.36\textwidth}
		\includegraphics[width=\textwidth]{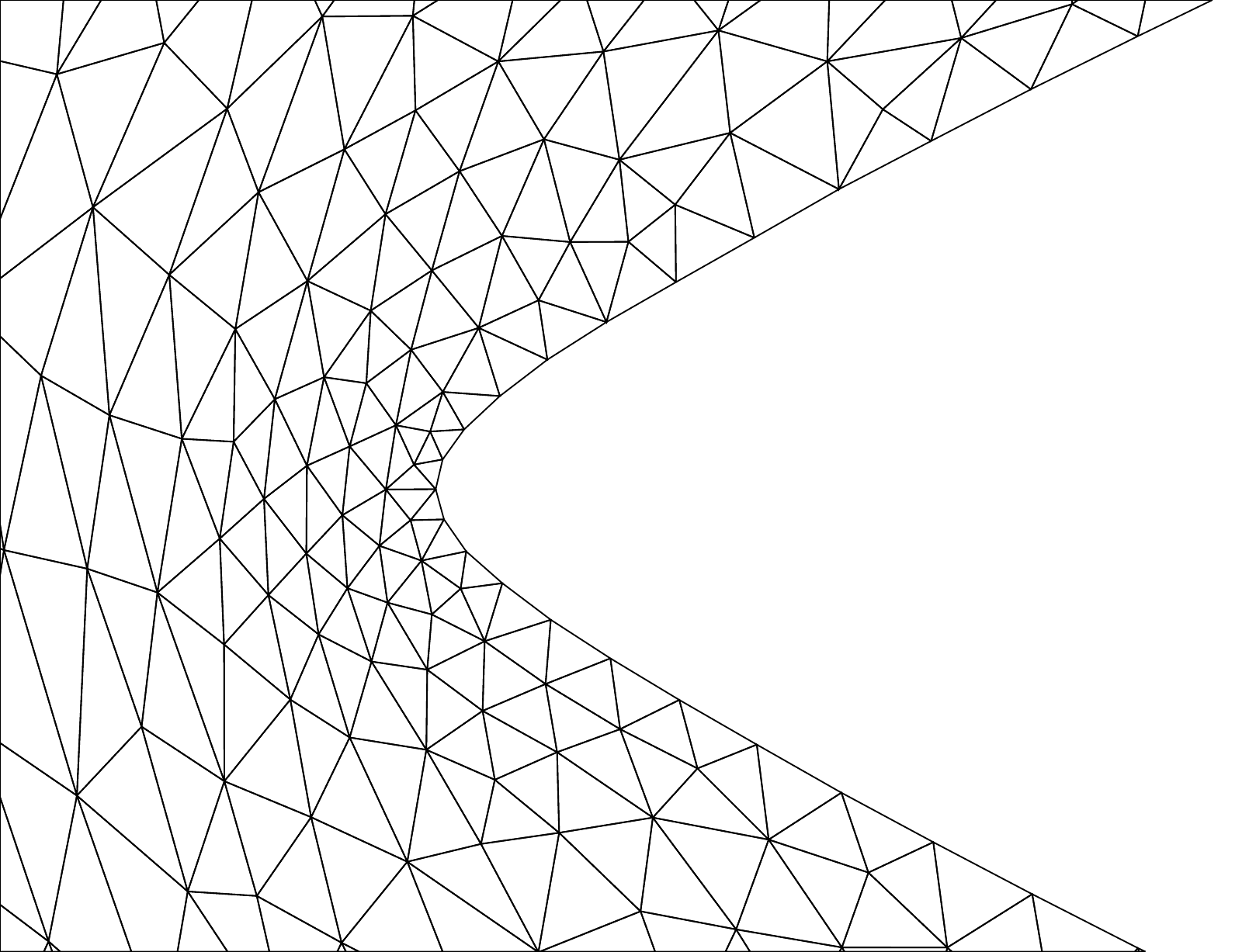}
		\subcaption{$\eext = 2.0$}
	\end{subfigure}
	\begin{subfigure}[c]{0.36\textwidth}
		\includegraphics[width=\textwidth]{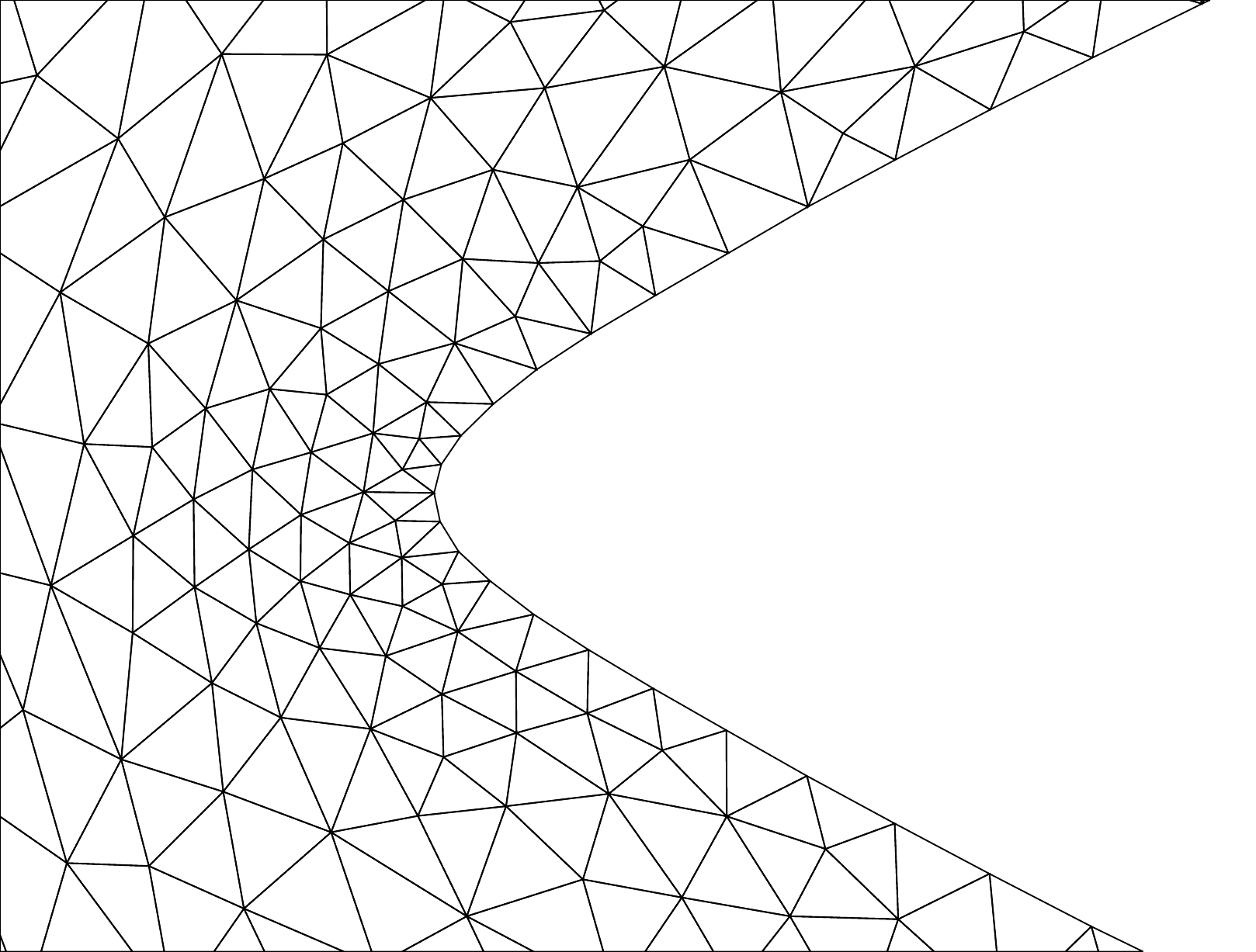}
		\subcaption{$\eext = 3.0$}
	\end{subfigure}
	\caption{Experiments with $\edet = 0.05$ constant and variable $\eext$ factor.}
	\label{fig::extension-factor}
\end{figure}
In this section we visualize the influence of the choice of $\edet$ and $\eext$ on the optimization. This demonstrates how the set of admissible shapes $\mathcal{F}_\text{adm}$ is determined thereby.
The underlying experiment is a flow in $\holdall$ over a circular specimen as described at the beginning of \cref{sec::numerics}.
The viscosity is again chosen to be $\nu = \num{0.01}$.
The domain is discretized with \num{244} segments on $\Gobs$ and \num{12640} triangles in $\Omega$.

First, we observe the influence of $\edet$ on the set of admissible shapes $\mathcal{F}_\text{adm}$.
\Cref{fig::det-condition} visualizes how the condition $\det(D\F) \geq \edet$ acts on the optimal shape $\F(\Gobs)$.
Here we choose $\edet \in \lbrace 0.5, 0.25, 0.2, 0.1 \rbrace$ beginning with the largest and then decreasing values.
This condition can be interpreted such that the allowed, local change of volume in $\Omega$ is relaxed from top to bottom in \cref{fig::det-condition}.
In this experiment it turns out that in the last computation with $\edet = \num{0.1}$ the condition is inactive.
Here $\eext= \num{3.0}$ is fixed in all computations.

The next experiment follows the same setup with the only difference that $\edet = \num{5e-2}$ is now fixed and $\eext \in \lbrace \num{0.0}, \num{0.25}, \num{0.5}, \num{1.0}, \num{2.0}, \num{3.0}\rbrace$ takes increasing values.
The mesh deformations, resulting from the optimal solution $\F = \id + w$, are visualized in \cref{fig::extension-factor}.
Each of the subfigures shows a clip of size $\num{0.1}\times\num{0.08}$ around the tip of the optimal shape.
Besides the significantly improved mesh qualities and more adequate set $\mathcal{F}_\text{adm}$ we also observe that the Newton solver benefits from the appropriate choice of $\edet$ and $\eext$.
As soon as the condition $\det(D\F) \geq \edet$ becomes active, the optimality system \crefrange{eq::opt_sys_w}{eq::opt_sys_adj_bc} is not differentiable any further and the solver switches to semismooth Newton's method.
This effect is already documented in \cite{haubner2020continuous} for the case of Stokes flows and linear extension operator $S$.

\subsection{Extending the local-only injectivity}
\label{subsec::injectivity}
\begin{figure}[h!]
	\begin{subfigure}[t]{0.32\textwidth}
		\includegraphics[width=\textwidth]{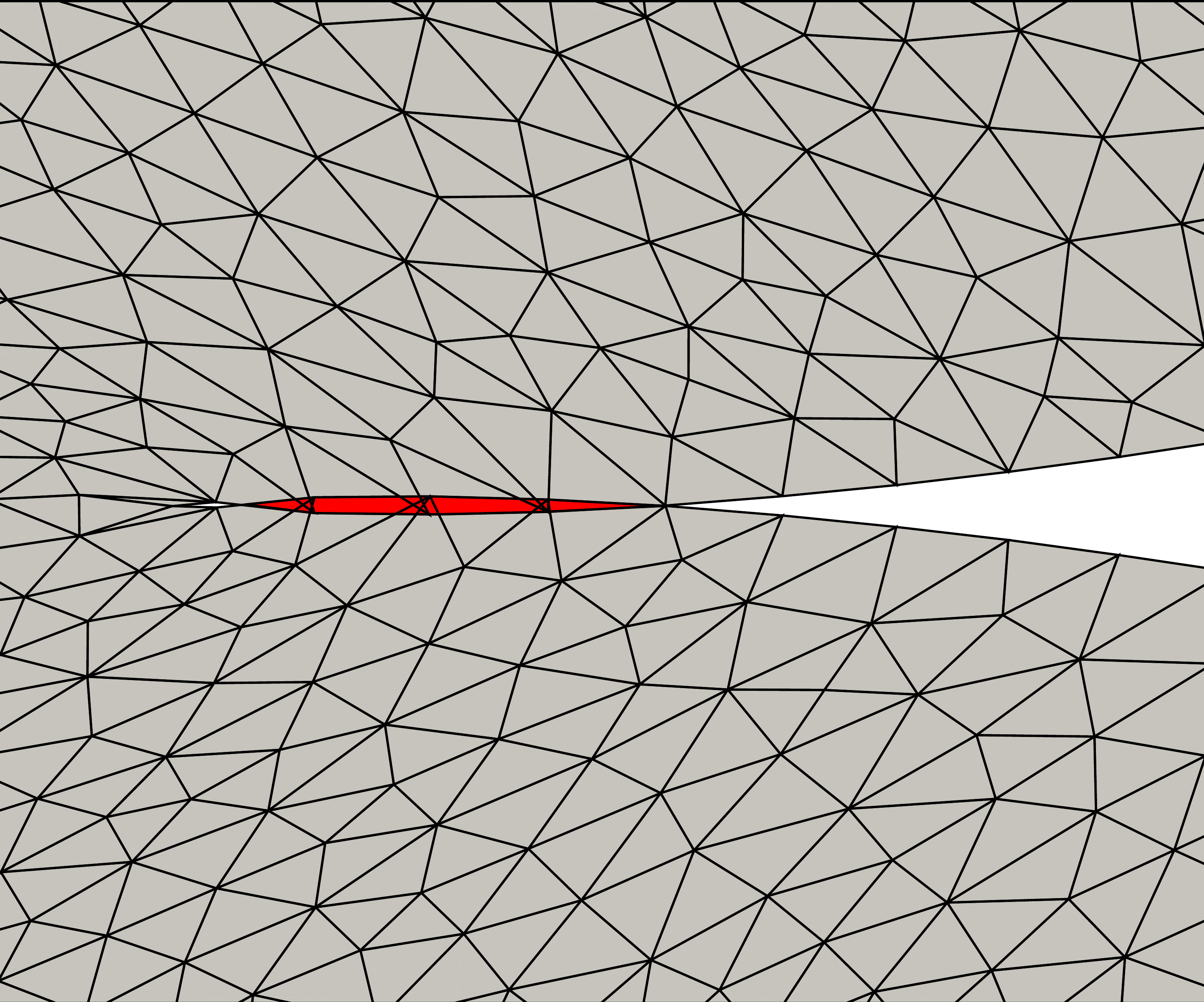}
		\subcaption{Overlap of cells (red) in $\F(\Omega)$}
		\label{fig::overlappings}
	\end{subfigure}
	\begin{subfigure}[t]{0.32\textwidth}
		\includegraphics[width=\textwidth]{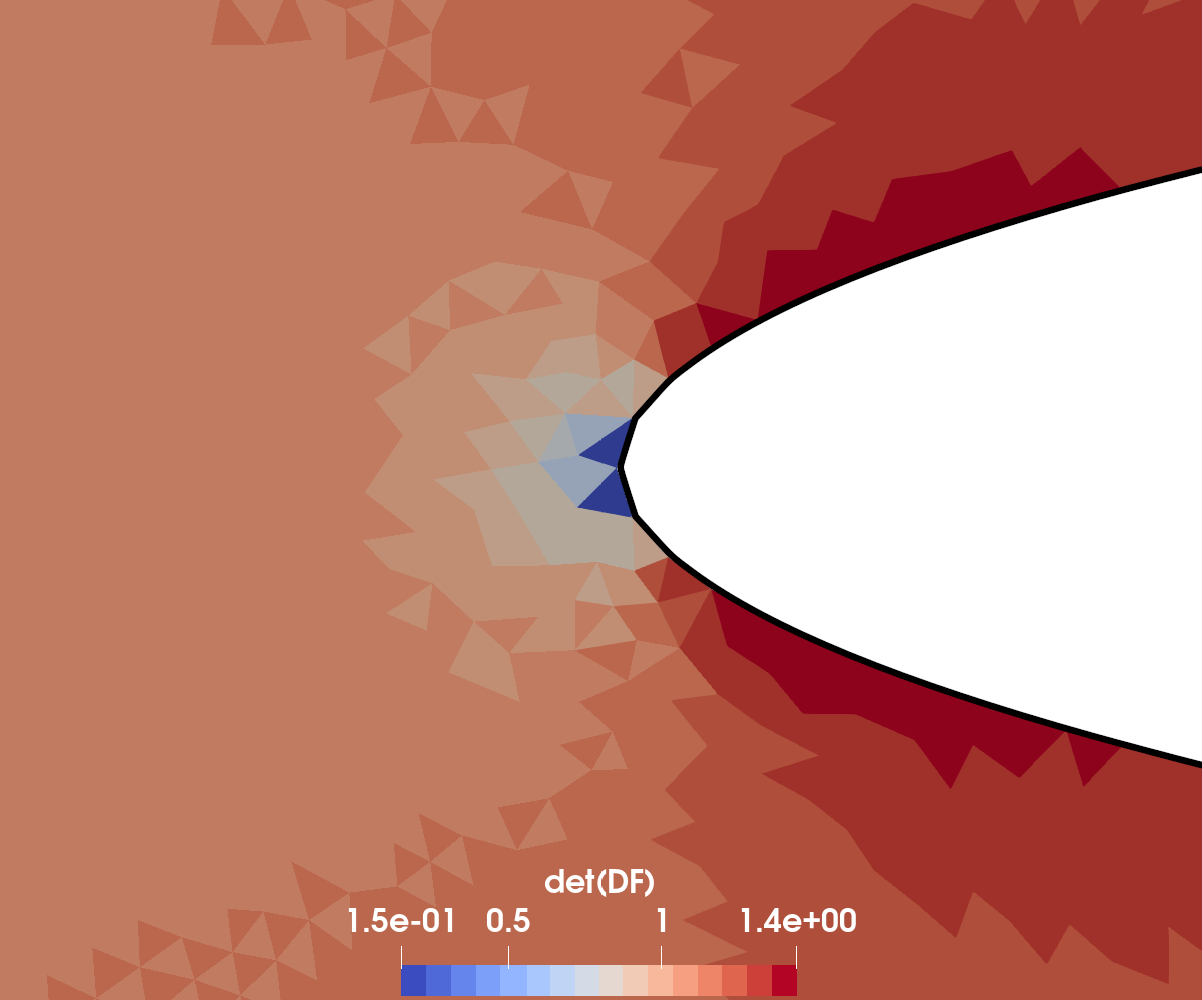}
		\subcaption{$\det(D\F)$ for hollow $\Oobs$}
		\label{fig::det-ellipse-hollow}
	\end{subfigure}
	\begin{subfigure}[t]{0.32\textwidth}
		\includegraphics[width=\textwidth]{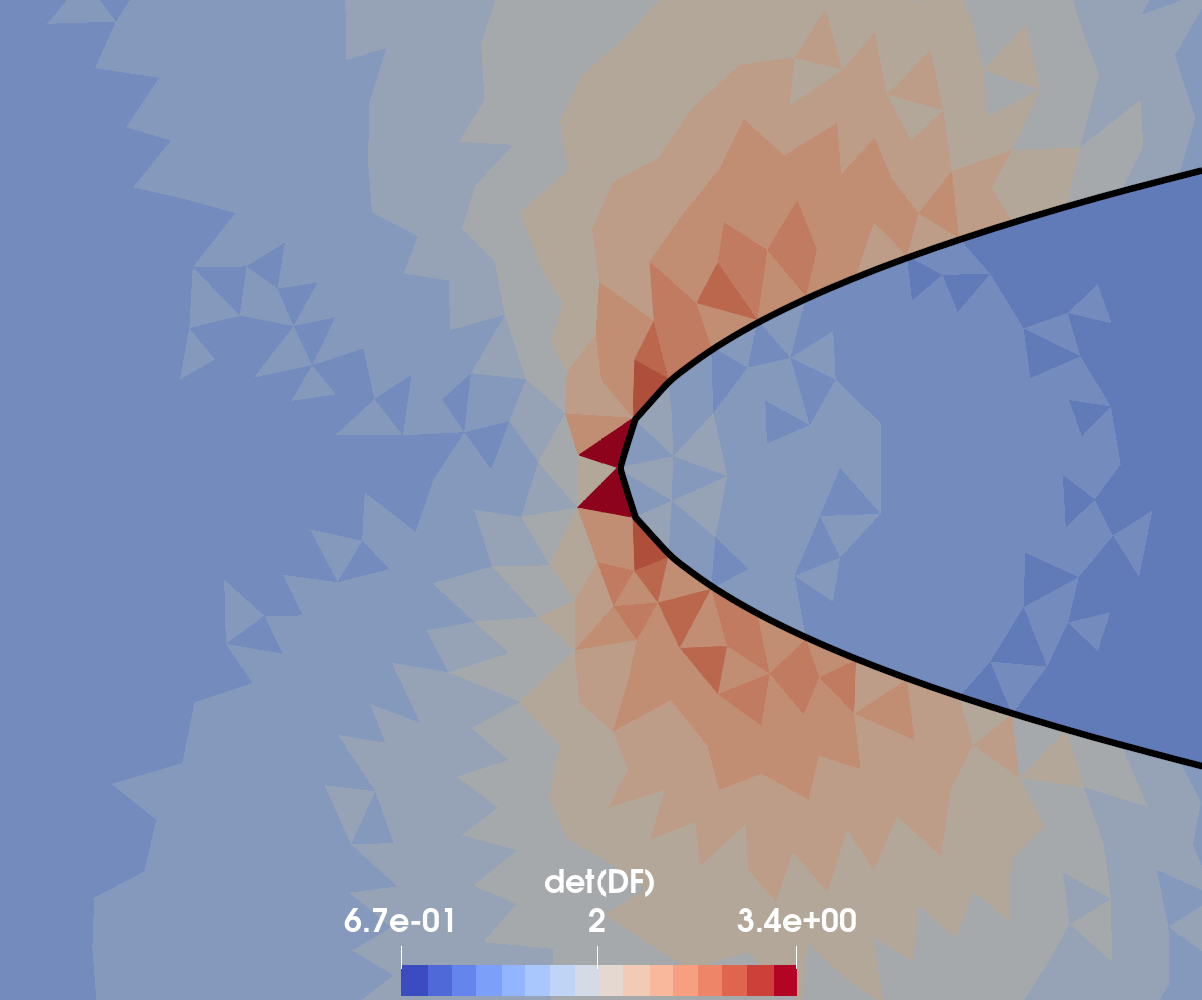}
		\subcaption{$\det(D\F)$ for discretized $\Oobs$}
		\label{fig::det-ellipse-filled}
	\end{subfigure}
	\caption{Closeup visualization of the discretization around the tip of the ellipse experiment.}
	\label{fig::ellipse-nose}
\end{figure}
\begin{figure}[h!]
	\begin{subfigure}{\textwidth}
		\centering
		\includegraphics[width=0.7\textwidth]{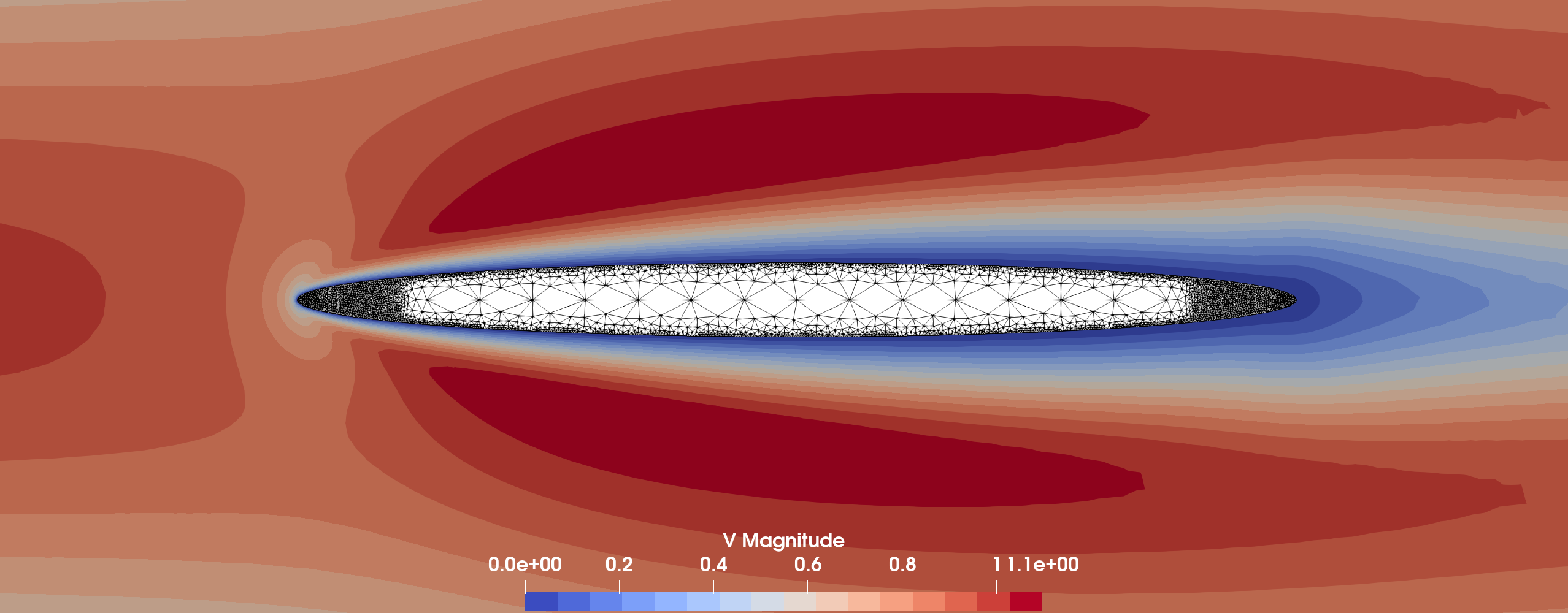}
		\subcaption{Reference domain}
		\label{fig::ellipse-reference}
	\end{subfigure}\\[0.2cm]
	\begin{subfigure}{\textwidth}
		\centering
		\includegraphics[width=0.7\textwidth]{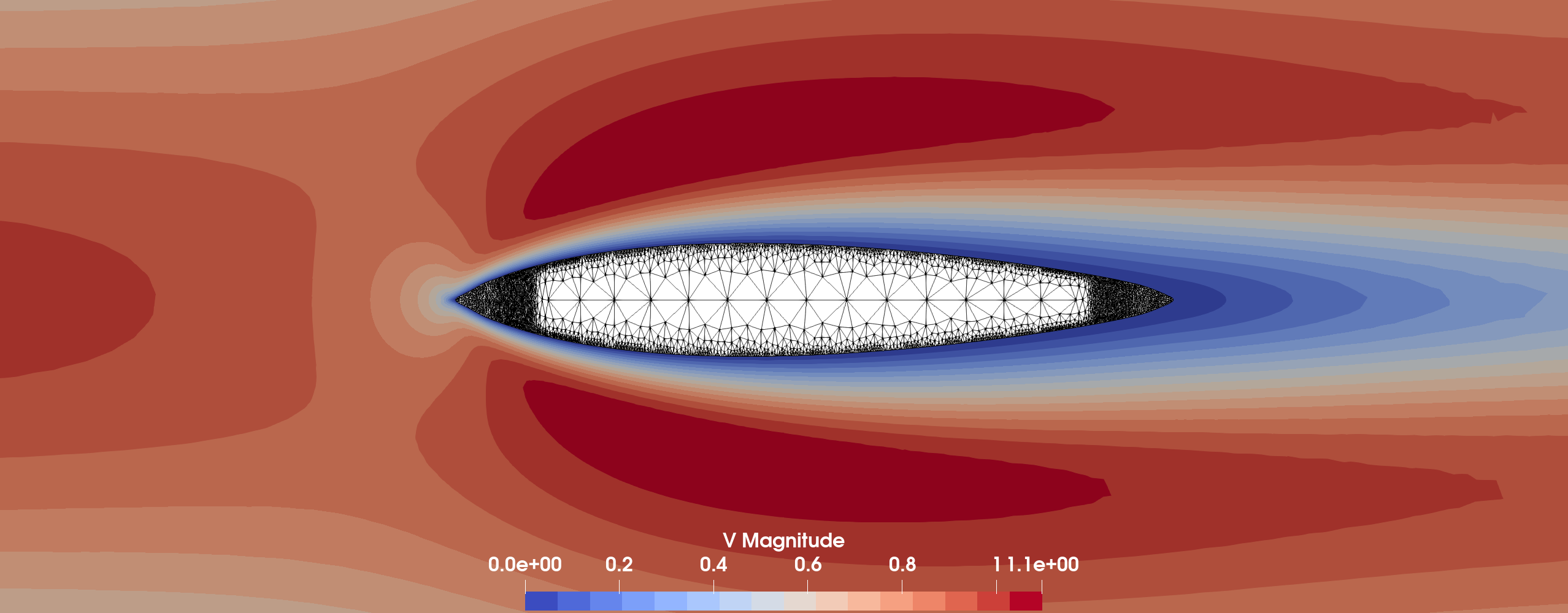}
		\subcaption{Grid deformed according to optimal displacement $w = S(c)$}
		\label{fig::ellipse-filled}
	\end{subfigure}
	\caption{Flow over ellipsoidal reference shape and optimal solution with visualization of auxiliary grid in $\Oobs$. Color denotes norm of velocity field $\Vert v \Vert_2$.}
	\label{fig::ellipse-experiment}
\end{figure}
This section focuses on an effect that is likely to appear for non-spherical reference domains.
In particular, for the aerodynamic experiments considered in this article it might happen that the upper surface of the obstacle overlaps the lower one.
Especially for large deformations from reference to optimal shape and for shapes that are streched parallel to the flow axis, we encounter effects as depicted in \cref{fig::overlappings} for the experiment shown in \cref{fig::ellipse-experiment}.
In other words, $\Omega$ $\mapsto \F(\Omega)$ is not globally injective in this situation.
This is due to the fact that the condition $\det(D\F)$ ensures injectivity of $\F$ only locally but not globally.

In the following we propose a modification of the extension operator $S$ in order to extend the injectivity.
Recall that in the setting followed up to here the obstacle domain $\Oobs$ is treated as void and there is no discritization within.
We now consider the operator $S$ on the entire holdall domain $\holdall = \Omega \cup \Oobs$ in contrast to the state equation that remains in $\Omega$.
Moreover, the condition $\det(D\F) \geq \edet > 0$ is now required on $\holdall$.
Thus, the displacement field is defined by $w \in H_0^1(\holdall, \R^d)$.
Moreover, we reformulate the weak formulation of the extension operator $S$ given in \cref{eq::nonlinear_ext} to
\begin{equation}
\begin{aligned}
\int_{\holdall} (Dw + Dw^\top):D\test{w} + \eext(Dw\, w)\cdot \test{w} \, dx = \int_\Gobs b \test{w} \, ds
\end{aligned}
\end{equation}
for all $\test{w} \in H_0^1(\holdall, \R^d)$ and appropriate $\eext \geq 0$.
Simultaneously, the penalty term, which enforces the local injectivity, in \cref{eq::Lagrangian} changes to 
\begin{equation}
\frac{\beta}{2} \int_{\holdall} (( \edet - \det(D\F) )_+ )^2 \, dx.
\end{equation}

For this experiment we choose $\nu = \num{0.01}$ as in the previous sections.
The holdall $\holdall$ has the same outer dimension and the specimen $\Gobs$ is an ellipse with semimajor-axis $r_1=\num{2.7}$, semiminor-axis $r_2=\num{0.2}$ and barycenter $\bc(\Oobs) = (0, 0)^\top$.
Its surface is subdivided in \num{884} segments.
Further, $\holdall$ is discretized by \num{36360} triangles, \num{29930} in $\Omega$ and \num{6430} in $\Oobs$.

\Cref{fig::ellipse-nose} visualizes the effect of the mapping $\F$ on the discretization grid.
In \cref{fig::overlappings} the optimal solution for $\alpha=\num{1e-2}$ is shown.
Note that in this particular case we stop the optimization for a larger value, since this already leads to singularities.
\Cref{fig::det-ellipse-hollow} depicts $\det(D\F)$, which is again bound away from zero by $\edet=\num{5e-2}$.
It can be seen that, although this condition is inactive, the non-injective mapping can not be prevented.

The same experiment is then conducted with the changes proposed in the beginning of this section, which leads to the values of $\det(D\F)$ shown in \cref{fig::det-ellipse-filled}.
Now $\Oobs$ is discretized and $S$ also acts on the interior of the specimen.
Here the optimization is performed with the setting $\ainit=\num{1e-4}$, $\adec=\num{5e-1}$ and $\atarget=\num{1e-10}$.
The resulting optimal solution is visualized in \cref{fig::ellipse-experiment} where \cref{fig::ellipse-reference} shows the reference domain and the velocity field computed for this configuration.
\Cref{fig::ellipse-filled} depicts the domain $\F(\holdall)$ and the velocity field computed on $\F(\Omega)$.
Note that the relatively fine grid is chosen at the front and the back of the shape due to the large curvature of $\Gobs$ in these regions.
This experiment turns out to be more challenging than, e.g., a spherical reference shape since on coarse grids the normal vector field in these areas tends to be underresolved.
From a computational point of view, it is attractive to have a coarse grid in $\Oobs$, as chosen in the center of the specimen, to reduce cost for the solution of the operator $S$.

\subsection{Three-dimensional results}
\label{subsec::3d-results}
In this section we perform a three-dimesional optimization experiment as a proof of concept.
As already observed in \cite{haubner2020continuous} for the Stokes experiment, more care has to be taken for the decrease-strategy of $\alpha$ in \cref{alg::direct}.
Especially the semismooth Newton solver shows to be challenging w.r.t.\ to convergence when the condition $\det(D\F) \geq \edet$ becomes active.

The experiment shown in \cref{fig::3d-result} is within the framework described at the beginning of \cref{sec::numerics}.
The flow tunnel $\Omega$ is discretized by \num{632093} tetrahedrons and the surface of the spherical specimen in the reference configuration $\Gobs$ consist of \num{8558} triangles.
Further, the viscosity is chosen to be $\nu=\num{0.01}$ and in \cref{alg::direct} we set $\ainit = \num{1e-4}$, $\adec = \num{5e-1}$ and $\atarget=\num{1e-6}$.
The results shown here are obtained with an extension factor of  $\eext=15$.
We visualize the impact of the optimization on the fluid by stream lines of the velocity field in \cref{fig::3d-result}.
This figure also shows the effect of the particular operator $S$ on the quality of the surface mesh when it undergoes the optimal deformation $\F$.
The combination of Laplace-Beltrami \cref{eq::nonlinear_ext_lb} and the nonlinear extension equation \cref{eq::nonlinear_ext_lb} leads to a homogeneous distribution of triangles on the surface $\Gobs$.
It can be observed that this is due to tangential components in $w\vert_\Gobs$.
This is a benefit of a vector-valued extension equation over approaches which utilize a static extension of the normal vector field in order to extend the boundary control to the surrounding volume.
Furthermore, \cref{fig::3d-mesh-nose} shows a zoom-in to the tip of the deformed domain $\F(\Omega)$.
Here we can see a crinkled clip in the $x_1 x_2$-plane with $x_3=0$, which shows the quality of the tetrahedrons.
\begin{figure}[h!]
	\centering
	\begin{subfigure}{0.7\textwidth}
		\includegraphics[width=\textwidth]{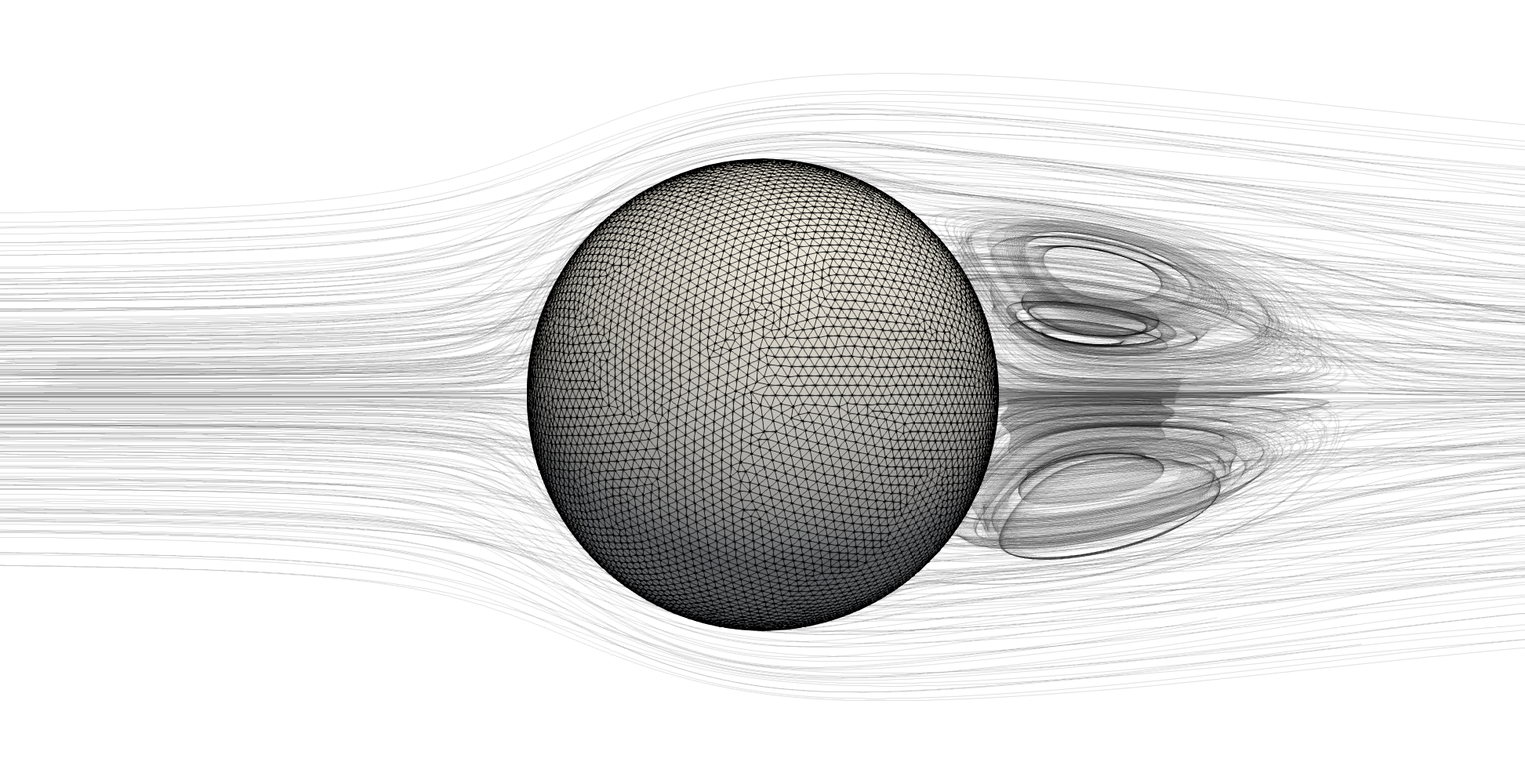}
		\subcaption{Surface grid of reference domain.}
	\end{subfigure}
	\begin{subfigure}{0.7\textwidth}
		\includegraphics[width=\textwidth]{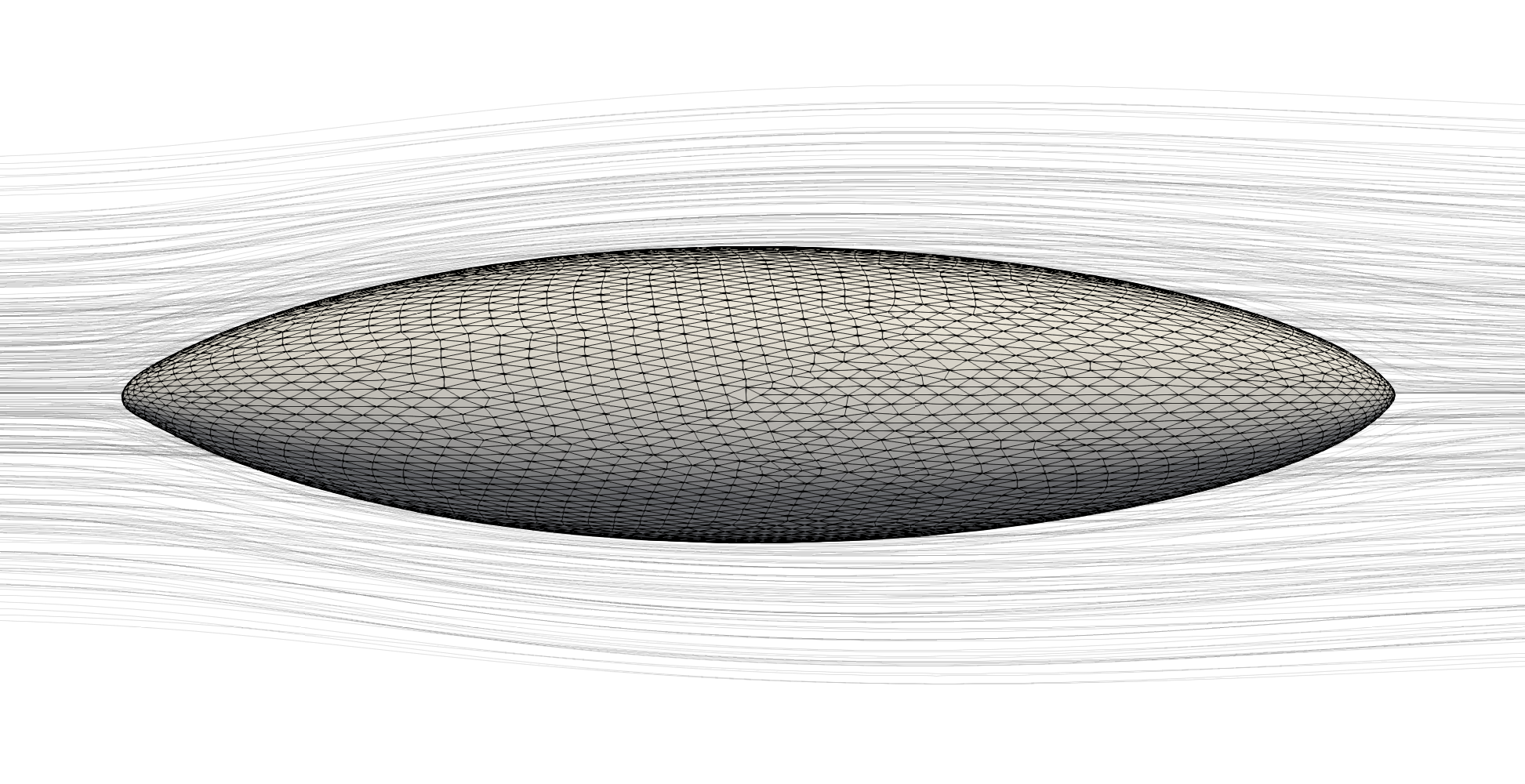}
		\subcaption{Optimal solution and deformed surface grid $\F(\Gobs)$.}
	\end{subfigure}
	\caption{Velocity stream lines computed on reference and optimal domain together with visualization of surface discretization.}
	\label{fig::3d-result}
\end{figure}

\begin{figure}[h!]
	\centering
	\includegraphics[width=0.4\textwidth]{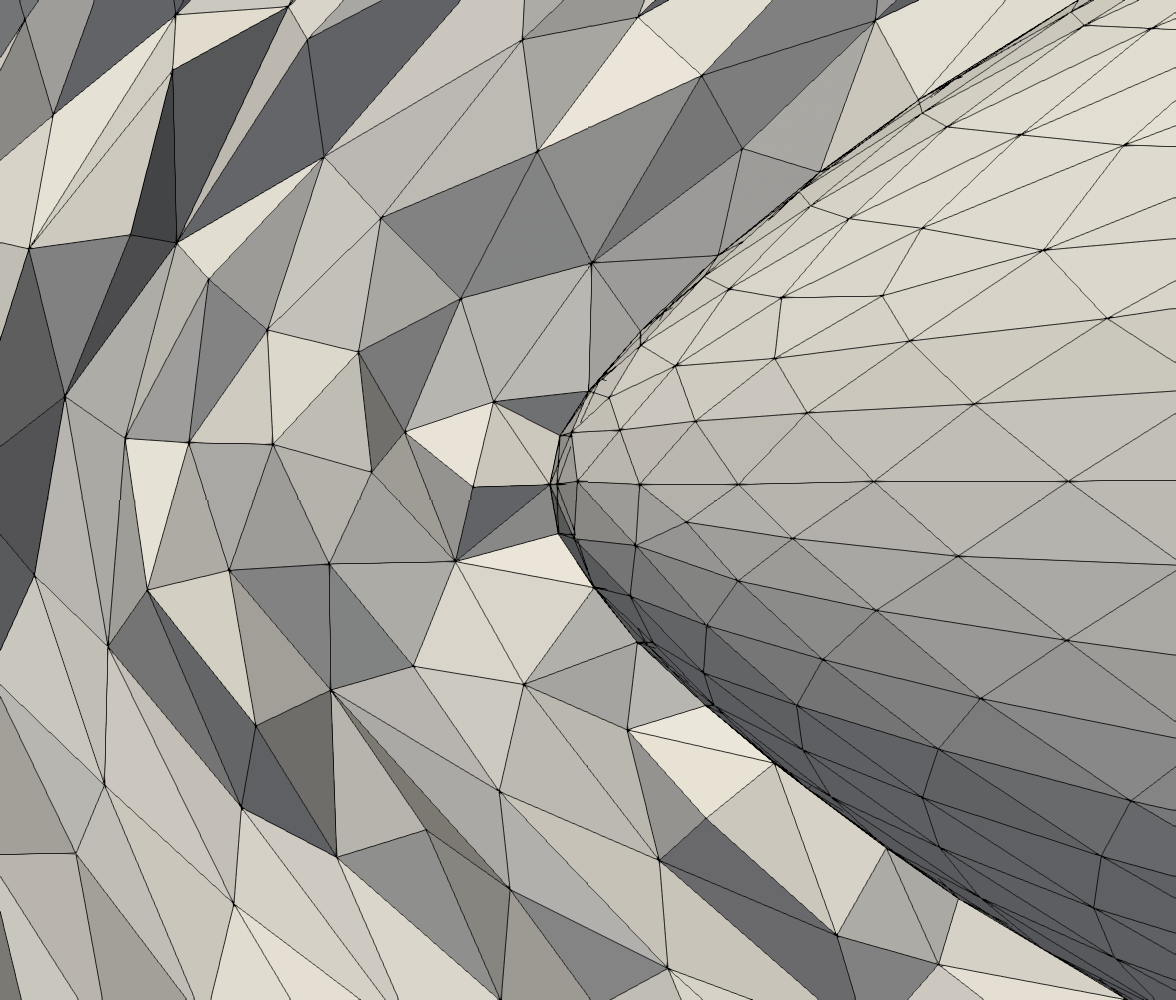}
	\caption{Crinkled clip of $x_1 x_2$-plane with $x_3=0$ showing deformed mesh $\F(\Omega)$ together with surface elements $\F(\Gobs)$.}
	\label{fig::3d-mesh-nose}
\end{figure}

\subsection{Quantification of the influence of $\eext$ on mesh quality}
\label{subsec::quality-results}
This section presents numerical experiments which investigate the influence of the nonlinear extension operator $S$ on the mesh quality in 2d and 3d (cf. \cref{fig::mesh_quality}).
Recall that the discretization mesh is not actually deformed within the optimization.
We though deform the reference domain $\Omega$ according to the optimal control $w = S(\control)$ and the corresponding deformation $\F = \id + w$.
The 2d experiment is conducted on the same computational domain as before with a circular specimen, \num{312} surface segments and \num{6168} triangles in $\Omega$.
The fluid viscosity is chosen to be $\nu = \num{0.01}$.
\Cref{fig::quality_2d} visualizes the influence of $\eext \in \lbrack0, 1 \rbrack$ on the mesh quality of $\F(\Omega)$.
It is measured by the ratio of radii of largest inscribed and smallest circumscribed circle, where the plot shows the value of the worst triangle.

This experiment quantifies the effect which is already visualized in \cref{fig::extension-factor}.
For a shape optimization with large deformations from reference to optimal configuration, i.e.\ $\Vert w \Vert_{L^2(\Gobs)}$ is relatively large, a pure linear extension operator $S$ does not reliably lead to satisfying mesh qualities.
Moreover, it can be seen that in this particular experiment there is a saturation effect of the nonlinearity in $S$ starting at approximately $\eext \approx \num{1.5}$.
\Cref{fig::quality_3d} shows the results of a similar experiment in 3d.
Here a mesh is chosen with \num{6040} surface triangles on $\Oobs$ and \num{147 385} tetrahedrons in $\Omega$.
Note that we decrease the viscosity to $\nu = 0.1$ in this experiment in order to be able to obtain results for $\eext < 3.0$.
In 3d quality is measured by the radius ratio of smallest circumscribed sphere to the largest inscribed one.
Again the worst element is visualized.
Also note that the y-axis is in log-scale.
In this setting it turns out, that the effect of compressed cells near the tip and back of the shape, which is stretching due to a decrease in $\alpha$, is stronger than in 2d.
We explain the solver failure due to the semismoothness in the optimality system, which becomes active in a significant number of finite elements in this situation.
However, it can be observed that, starting with approximately $\eext \approx 8$, a saturation is possible, where the mesh quality of $\F(\Omega)$ remains adequate for further numerical computations.
\pgfplotsset{width=1.0\textwidth,compat=1.9}
\begin{figure}[h!]
	\centering
	\begin{minipage}{0.6\textwidth}
		\centering
		\begin{tikzpicture}
		\begin{axis}[
		title={2d Case},
		ylabel={Mesh quality},
		xlabel={$\eext$},
		xmin=-0.1, xmax=3.1,
		legend pos=north east,
		ymajorgrids=true,
		grid style=dashed,
		ymode=linear,
		]
		\addplot[mark=none, black] table{
			0 1.8
			3 1.8
		};
		\addlegendentry{Reference mesh}
		\addplot[
		color=black,
		mark=square,
		]
		table {
			0	31.74
			0.15	28.16
			0.3	24.29
			0.45	20.48
			0.6	16.64
			0.75	13.03
			0.9	10.77
			1.05	8.79
			1.2	6.4
			1.35	5.3
			1.5	4.5345
			1.65	3.9565
			1.8	3.60772
			1.95	3.676
			2.1	3.74356
			2.25	3.81015
			2.4	3.87561
			2.55	3.93977
			2.7	4.00244
			2.85	4.0635
			3	4.12284
		};
		\addlegendentry{$\F(\Omega)$}
		\end{axis}
		\end{tikzpicture}
		\subcaption{Quality measured by ratio of circumscribed/inscribed cricle radii.}
		\label{fig::quality_2d}
	\end{minipage}
	\vspace{0.5cm}
	\begin{minipage}{0.6\textwidth}
		\centering
		\begin{tikzpicture}
		\begin{axis}[
		title={3d Case},
		ylabel={Mesh quality},
		xlabel={$\eext$},
		xmin=-0.5, xmax=18.5,
		legend pos=north east,
		ymajorgrids=true,
		grid style=dashed,
		ymode=log,
		]
		\addplot[mark=none, black] table{
			0 2.277
			18 2.277
		};
		\addlegendentry{Reference mesh}
		\addplot[
		color=black,
		mark=square,
		]
		table {
			0		1534.93
			2		352.988
			3		134.44
			4		58.6956
			4.667	33.9368
			5.333	18.1106
			6		13.0168
			9		7.57392
			12		7.6078
			15		7.59228
			18		7.92797
		};
		\addlegendentry{$\F(\Omega)$}
		\end{axis}
		\end{tikzpicture}
		\subcaption{Quality measured by ratio of circumscribed/inscribed sphere radii.}
		\label{fig::quality_3d}
	\end{minipage}
	\caption{Quality of worst element (triangle in 2d or tetrahedron in 3d) after applying optimal deformation for a range of extension factors $\eext$.}
	\label{fig::mesh_quality}
\end{figure}
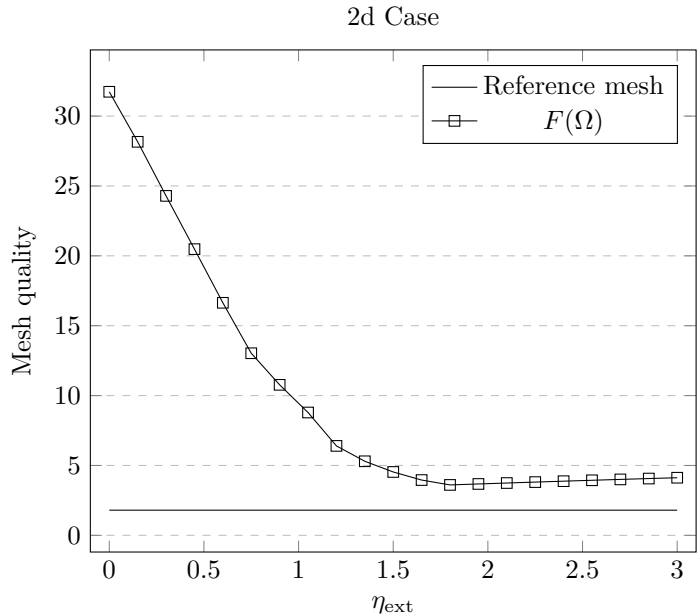
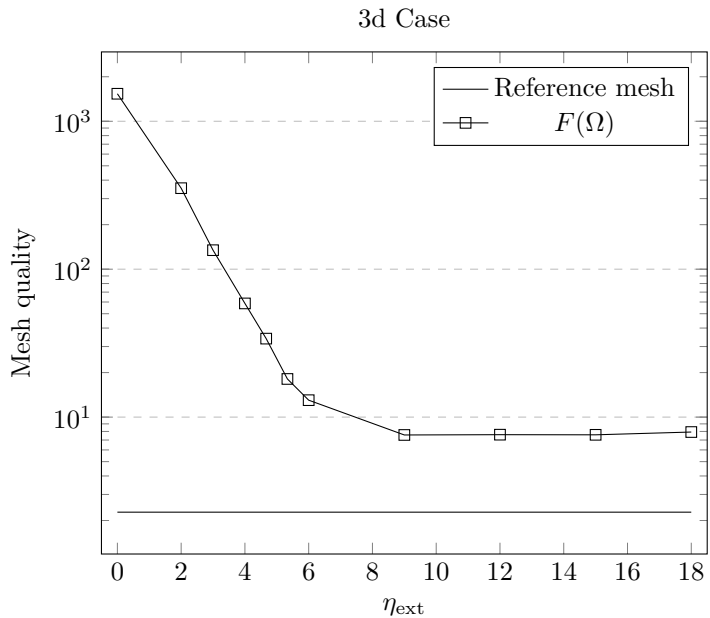

\subsection{An iterative optimization algorithm}
\label{subsec::iterative}

\begin{algorithm}[h!]
	\caption{Iterative optimization algorithm}
	\label{alg::iterative}
	\begin{algorithmic}[1]
		\Require $ 0 < \atarget \leq \ainit$, $0 < \adec < 1$, $0 < \epsilon$
		\State Set $y_{0}$ to zero
		\State $k \gets 0, \ell \gets 0$
		\State $\alpha_k \gets \ainit$
		\While{$\alpha_k \geq \adec$}
		\Repeat 
		\State Set $y_{\ell}$ as initial guess
		\State Solve \cref{eq::opt_sys_adj_v,eq::opt_sys_adj_p}  for $ \left(v, p \right)_{\ell+1}$
		\State Solve \cref{eq::opt_sys_v,eq::opt_sys_p} 
		for $\left( \mult{v},\mult{p} \right)_{\ell+1}$
		\State \parbox[t]{0.8\textwidth}{ Solve \cref{eq::opt_sys_w,eq::opt_sys_adj_w,eq::opt_sys_b,eq::opt_sys_adj_b,eq::opt_sys_c,eq::opt_sys_adj_vol,eq::opt_sys_adj_bc} for $\left( w, b, \control, \mult{w}, \mult{b}, \mult{\vol}, \mult{\bc} \right)_{\ell+1} $ with semismooth Newton's method and regularization parameter $\alpha_k$\strut}
		\State $\ell \gets \ell+1$
		\Until{$\frac{\Vert \control_{\ell+1} - \control_\ell \Vert_{L^2(\Gobs)}}{\Vert \control_{\ell+1}\Vert_{L^2(\Gobs)}} < \epsilon$}
		\State $\alpha_{k+1} \gets \adec\alpha_{k}$
		\State $k \gets k+1$
		\EndWhile
	\end{algorithmic}
\end{algorithm}

\begin{figure}[h!]
	\centering
	\pgfplotsset{height=0.3\textheight,width=0.9\textwidth,compat=1.9}
	\begin{tikzpicture}
	\begin{axis}[
	title={},
	ymin=1e-7, ymax=2,
	xmin=-1,xmax=53,
	axis y line*=left,
	axis x line=none,
	ylabel={$\alpha$},
	ymode=log,
	]
	\addplot[const plot] table{
	0	1.000000E+00
	1	5.000000E-01
	2	2.500000E-01
	3	1.250000E-01
	4	1.250000E-01
	5	6.250000E-02
	6	6.250000E-02
	7	3.125000E-02
	8	3.125000E-02
	9	1.562500E-02
	10	1.562500E-02
	11	7.812500E-03
	12	7.812500E-03
	13	7.812500E-03
	14	3.906250E-03
	15	3.906250E-03
	16	1.953120E-03
	17	1.953120E-03
	18	9.765620E-04
	19	9.765620E-04
	20	4.882810E-04
	21	4.882810E-04
	22	2.441410E-04
	23	2.441410E-04
	24	2.441410E-04
	25	1.220700E-04
	26	1.220700E-04
	27	6.103520E-05
	28	6.103520E-05
	29	6.103520E-05
	30	3.051760E-05
	31	3.051760E-05
	32	1.525880E-05
	33	1.525880E-05
	34	1.525880E-05
	35	7.629390E-06
	36	7.629390E-06
	37	3.814700E-06
	38	3.814700E-06
	39	3.814700E-06
	40	1.907350E-06
	41	1.907350E-06
	42	1.907350E-06
	43	9.536740E-07
	44	9.536740E-07
	45	9.536740E-07
	46	4.768370E-07
	47	4.768370E-07
	48	4.768370E-07
	49	2.384190E-07
	50	2.384190E-07
	51	2.384190E-07
	52	2.384190E-07
	};
	\label{pgfplots-bars}
	\end{axis}
	\begin{axis}[
	xmin=-1,xmax=53,
	axis y line*=right,
	ylabel={Objective},
	xlabel={\#agglomerated iterations ($\ell$)},
	ymode=log,
	]
	\addplot table{
	0	1.39169
	1	1.3823
	2	1.37549
	3	1.36201
	4	1.33657
	5	1.34222
	6	1.30759
	7	1.31297
	8	1.27663
	9	1.28064
	10	1.24895
	11	1.25149
	12	1.22771
	13	1.23481
	14	1.22705
	15	1.21408
	16	1.21424
	17	1.2049
	18	1.20564
	19	1.20006
	20	1.20084
	21	1.19752
	22	1.19832
	23	1.19619
	24	1.19724
	25	1.19644
	26	1.1958
	27	1.19608
	28	1.1952
	29	1.19584
	30	1.19523
	31	1.19527
	32	1.19511
	33	1.19491
	34	1.19508
	35	1.19477
	36	1.19485
	37	1.19463
	38	1.19463
	39	1.19461
	40	1.19449
	41	1.19449
	42	1.19443
	43	1.19441
	44	1.19432
	45	1.19435
	46	1.19424
	47	1.19427
	48	1.19418
	49	1.19422
	50	1.1941
	51	1.19418
	52	1.19408	
	};
	\label{pgfplots-obj}
	\addlegendentry{Objective}
	\addlegendimage{/pgfplots/refstyle=pgfplots-bars}\addlegendentry{$\alpha$}
	\end{axis}
	\end{tikzpicture}
	\caption{Iterative solution strategy according to \cref{alg::iterative} with $\ainit=1$, $\adec=0.5$ and $\atarget=\num{2e-7}$.}
	\label{fig::iterative-obj}
\end{figure}
In the previous sections we solve the nonlinear, non-smooth optimality system with the direct solution strategy given in \cref{alg::direct}.
Moreover, a direct solver library is applied to the resulting linear systems within semismooth Newton's method.
This approach is clearly limited due to the high memory requirement.
Especially, when the state equation results from a time-dependent problem, this procedure becomes impracticable.
Hence, in this section we focus on a numerical study of decoupling system \crefrange{eq::opt_sys_w}{eq::opt_sys_adj_bc}.
This approach is summarized in \cref{alg::iterative}.

We demonstrate that it is possible to decouple the solution process of state \cref{eq::opt_sys_adj_v,eq::opt_sys_adj_p}, adjoint \cref{eq::opt_sys_v,eq::opt_sys_p}  and shape related equations, i.e.\ \cref{eq::opt_sys_w,eq::opt_sys_adj_w,eq::opt_sys_b,eq::opt_sys_adj_b,eq::opt_sys_c,eq::opt_sys_adj_vol,eq::opt_sys_adj_bc}, from each other.
On the one hand, this allows to reuse existing solvers for the state equation and embed them into the shape optimization framework.
On the other, the memory requirement for linear solvers significantly reduces.
Moreover, the semismooth part \cref{eq::opt_sys_w} is split from the other equations and a solver can be particularly tailored for this purpose.

\Cref{alg::iterative} operates on the nonlinear optimality system as a fixpoint strategy.
In an outer loop it is again iterated over a decreasing regularization parameter $\alpha$ as in \cref{alg::direct}.
Thus, approximate solutions for the optimization problem according to $\alpha_k$ are utilized as initial guess for the nonlinear solver in iteration $k+1$.
Yet, unlike in the direct approach, the subproblems are only solved approximately by a fixpoint iteration, which solves the decoupled equations of the optimalitiy system in turns.
The termination criterion for this inner loop is the relative change in the control variable $\control$ measured in the $L^2(\Gobs)$-norm.

In \cref{fig::iterative-obj} the results of one run of \cref{alg::iterative} are shown.
The underlying optimization experiment is a 2d computation on the same grid as in \cref{subsec::quality-results} with \num{312} surface segments and \num{6168} triangles in $\Omega$, $\adec = 0.5$, $\ainit = 1.0$, $\atarget=\num{2e-7}$, $\nu = 0.1$ and $\eext = 1.5$.
Note that the initial value of $\alpha$ is significantly larger then the choices made for \cref{alg::direct}.
\Cref{fig::iterative-obj} shows the required inner iterations until the condition
\begin{equation*}
\frac{\Vert \control_{\ell+1} - \control_\ell \Vert_{L^2(\Gobs)}}{\Vert \control_{\ell+1}\Vert_{L^2(\Gobs)}} < \epsilon
\end{equation*}
is fulfilled for $\epsilon = \num{1e-2}$.
Futhermore, the value of the objective $J$ (cf.\ \cref{eq::boundary_control_problem}) is visualized. It is computed in \cref{alg::iterative} in line 10 at the end of one inner loop.
Notice the jumps in the objective function between iteration 5 and 20.
In our experiments it turns out that this is an effect that both influences the minimal possible $\adec$ and $\ainit$.

In this setting a total number of 53 inner iterations, i.e.\ solutions of the state equation, are required to reach the optimal shape.
This numerical study can thus be seen as a proof of concept how to reduce the computational costs of the large, coupled, nonlinear system \crefrange{eq::opt_sys_w}{eq::opt_sys_adj_bc}.
Thus, the proposed method is applicable to more complex problems, such as non-stationary Navier-Stokes flows.

\section{Conclusion}
\label{sec::conclusion}
In this article we have proposed and numerically demonstrated choices of nonlinear extension operators within the method of mappings for aerodynamic shape optimization.
These operators are based on the idea that an additional, nonlinear advection term leads to a rearrangement of discretization cells along the major direction of deformations.

The main goal we have achieved is to circumvent mesh degeneracy effects that appear under large deformations when the extension of the boundary control is chosen according to linear elastic models.
Especially in the underlying aerodynamic drag minimization, where optimal shapes tend to become stretched in flow direction and compressed in the orthogonal directions, we have numerically investigated how mesh quality can be preserved.

We have also demonstrated one possibility to decouple the solution process of the optimality system in order to overcome issues of computational complexity.
Moreover, we have studied how the set of admissible shapes depends on the nonlinearity of the operator and how the local injectivity of mappings can be extended to large deformations.
Since the proposed methodology is formulated in function spaces without taking a specific discretization into account, another benefit of this approach is that it naturally allows to introduce concepts like adaptivity.
An important field for future investigations is a detailed description of properties of the set $\mathcal{F}_\text{adm}$, which is constructed in terms of the nonlinear extension operator $S$.

\section*{Acknowledgment}
The work of the authors has been supported by the Deutsche Forschungsgemeinschaft (DFG, German Research Foundation) within the Research Training Group 2583 \enquote{Modeling, Simulation and Optimization of Fluid Dynamic Applications}.

\printbibliography

\end{document}